\documentclass{siamart251216}
\usepackage{amsfonts}

\author{Yutong Zhang\thanks{Corresponding author. School of Mathematics, Sichuan University, 24 First Loop Road South Section I, Chengdu, Sichuan 610064, China. Email: \email{yutongzhang@stu.scu.edu.cn}.}
\and Yaoran Yang\thanks{School of Mathematics, Sichuan University, 24 First Loop Road South Section I, Chengdu, Sichuan 610064, China. Email: \email{yangyaoran@stu.scu.edu.cn}.}}
\date{}
\headers{Rogers tails for the lattice gamma parameter}{Yutong Zhang and Yaoran Yang}
\title{A Uniform Random-Lattice Tail Bound for the SVP Kissing-Profile Parameter}

\numberwithin{theorem}{section}

\newsiamthm{conjecture}{Conjecture}
\newsiamremark{remark}{Remark}

\DeclareMathOperator{\vol}{vol}
\DeclareMathOperator{\covol}{covol}

\newcommand{\R}{\mathbb{R}}
\newcommand{\Z}{\mathbb{Z}}

\newcommand{\Prob}{\mathbb{P}}
\newcommand{\E}{\mathbb{E}}
\newcommand{\1}{\mathbf{1}}
\newcommand{\eps}{\varepsilon}

\newcommand{\calE}{\mathcal{E}}
\newcommand{\calF}{\mathcal{F}}
\newcommand{\calG}{\mathcal{G}}

\newcommand{\calP}{\mathcal{P}}

\newcommand{\dn}{\,d\mu_n}
\newcommand{\dvol}{\,dx}
\newcommand{\wh}{\widehat}
\newcommand{\SVP}{\textsf{SVP}}
\newcommand{\QRAM}{\textsf{QRAM}}

\newcommand{\Ball}{\mathrm{B}}
\newcommand{\Vlam}{V_{\lambda}}

\newcommand{\Nonzero}{\setminus\{0\}}

\newcommand{\norm}[1]{\left\|#1\right\|}

\begin{document}
\maketitle

\begin{abstract}
A recent SICOMP paper on classical and quantum algorithms for the shortest vector problem introduced a lattice-dependent parameter \(\gamma(L)\), bounded universally in the exponential sense by \(2^{0.402n+o(n)}\), and conjectured that this parameter is \(2^{o(n)}\) for most lattices.  We prove the Haar--Siegel random-lattice version in a stronger, dimension-uniform form.  Let \(X_n=\operatorname{SL}_n(\R)/\operatorname{SL}_n(\Z)\), let \(\mu_n\) be its invariant probability measure, and let \(\gamma(L)=\sup_{r\ge1} N_L(r\lambda_1(L))/r^n\), where \(N_L(R)\) counts nonzero vectors of \(L\) of Euclidean norm at most \(R\).  For every \(n\ge3\) and every \(T>0\),
\[
       \mu_n\{L\in X_n:\gamma(L)>T\}\le C T^{-1}
\]
with an absolute constant \(C\).  Consequently, for every sequence \(a_n\to\infty\), \(\gamma(L_n)\le a_n\) with \(\mu_n\)-probability tending to one; in particular \(\gamma(L_n)=2^{o(n)}\) with high probability.  In the product model of independent Haar--Siegel lattices, \(\gamma(L_n)\le \exp(\sqrt n)\) eventually almost surely.  The proof uses Rogers's second-moment estimate only through a dyadic self-normalization argument around the random scale \(\lambda_1(L)\).
\end{abstract}

\begin{keywords}
random lattices, shortest vector problem, kissing number, Rogers mean value theorem, Siegel transform, high-dimensional geometry of numbers
\end{keywords}

\begin{MSCcodes}
11H06, 11H31, 60B15, 68Q25
\end{MSCcodes}

\section{Introduction}

Let \(L\subset\R^n\) be a full-rank Euclidean lattice.  Write
\[
        \lambda_1(L)=\min\{\norm{x}:x\in L\Nonzero\},
        \qquad
        N_L(R)=\#\{x\in L\Nonzero:\norm{x}\le R\}.
\]
The lattice kissing number is
\[
        \tau(L)=N_L(\lambda_1(L)),
\]
and the radius-normalized kissing profile considered in \cite{aggarwal2025} is
\begin{equation}\label{eq:gamma-def-intro}
        \gamma(L)=\inf\{\Gamma>0:
              \forall r\ge1,
              N_L(r\lambda_1(L))\le \Gamma r^n\}.
\end{equation}
Equivalently,
\begin{equation}\label{eq:gamma-sup-intro}
        \gamma(L)=\sup_{r\ge1}\frac{N_L(r\lambda_1(L))}{r^n}.
\end{equation}
The parameter \(\gamma(L)\) interpolates between the local kissing number \(\tau(L)\) and the asymptotic point-counting constant.  It is scale-invariant:
\begin{equation}\label{eq:scale-inv}
        \gamma(aL)=\gamma(L),\qquad \lambda_1(aL)=a\lambda_1(L),
        \qquad N_{aL}(R)=N_L(R/a),\qquad a>0.
\end{equation}
In the algorithmic analysis of \cite{aggarwal2025}, one writes
\begin{equation}\label{eq:beta-def}
        \beta(L)=\gamma(L)^{1/n},
\end{equation}
so that \(\beta(L)^n\) is the quantity that appears in the complexity exponent.
The universal spherical-code input inherited from the Kabatianskii--Levenshtein bound and its lattice-counting formulation \cite{kabatianskii1978,pujol2009} is an asymptotic exponential bound:
\begin{equation}\label{eq:universal-kl}
        \sup_{L\subset\R^n}\gamma(L)\le 2^{0.402n+o(n)},
        \qquad
        \sup_{L\subset\R^n}\beta(L)\le 2^{0.402+o(1)},
        \qquad n\to\infty.
\end{equation}
Here the supremum is over full-rank lattices in dimension \(n\).  The \(o(n)\) term in \eqref{eq:universal-kl} is essential: a finite-dimensional literal bound \(\gamma(L)\le 2^{0.402n}\) would already fail in small dimensions because \(\gamma(L)\ge\tau(L)\).  The paper \cite{aggarwal2025} asked for the substantially sharper typical behavior
\begin{equation}\label{eq:sicomp-conj-informal}
        \gamma(L)=2^{o(n)}
        \qquad\hbox{for most lattices.}
\end{equation}
The purpose of this paper is to prove a canonical random-lattice formalization of
\eqref{eq:sicomp-conj-informal}, with a stronger quantitative tail.

\subsection{The probability model}

For \(n\ge2\), let
\begin{equation}\label{eq:Xn}
        X_n=\operatorname{SL}_n(\R)/\operatorname{SL}_n(\Z)
\end{equation}
be the space of unimodular lattices in \(\R^n\), equipped with its Haar--Siegel probability measure \(\mu_n\).  Thus a point \(g\operatorname{SL}_n(\Z)\in X_n\) corresponds to the lattice \(g\Z^n\), and
\begin{equation}\label{eq:det1}
        \det(g\Z^n)=1,
        \qquad
        \covol(g\Z^n)=1.
\end{equation}
The phrase ``almost all lattices'' across growing dimension has several inequivalent meanings.  The two used below are the following.

\begin{definition}[High-probability and product almost-all models]\label{def:models}
Let \(\calP_n\subseteq X_n\) be measurable.
We say that \(\calP_n\) holds with Haar--Siegel high probability if
\[
        \mu_n(\calP_n)\longrightarrow1.
\]
We say that \(\calP_n\) holds eventually almost surely in the independent product model if, on
\[
        \prod_{n\ge3}(X_n,\mu_n),
        \qquad
        L_3,L_4,\ldots\hbox{ independent},
\]
there exists \(n_0=n_0(L_3,L_4,\ldots)\) such that \(L_n\in\calP_n\) for every \(n\ge n_0\).
\end{definition}

The result below implies \eqref{eq:sicomp-conj-informal} in both senses.

\begin{theorem}[Uniform tail for the normalized kissing profile]\label{thm:main-tail}
There is an absolute constant \(C_\gamma<\infty\) such that, for every \(n\ge3\) and every \(T>0\),
\begin{equation}\label{eq:main-tail}
        \mu_n\{L\in X_n:\gamma(L)>T\}\le \min\{1,C_\gamma T^{-1}\}.
\end{equation}
More precisely, if \(C_R\) is any absolute Rogers--Schmidt variance constant satisfying \eqref{eq:rogers-variance} below, then, for every \(s>0\), every \(\theta>1\), and every \(\eta>0\),
\begin{equation}\label{eq:main-tail-parametric}
\mu_n\{L:\gamma(L)>\theta(1+\eta)s\}
\le
\frac{C_R}{s}\left(1+\frac{1}{\eta^2(1-\theta^{-1})}\right).
\end{equation}
In particular, taking \(\theta=2\) and \(\eta=1\),
\begin{equation}\label{eq:main-tail-four}
        \mu_n\{L:\gamma(L)>4s\}\le \frac{3C_R}{s}.
\end{equation}
Thus one may take, for example, \(C_\gamma=12C_R\).
\end{theorem}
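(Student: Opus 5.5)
The plan is to reduce the event \(\{\gamma(L)>\theta(1+\eta)s\}\) to a union of fixed-radius deviation events and apply Chebyshev with the Rogers--Schmidt variance bound to each one. Write \(V(R)=\vol\Ball(0,R)\). Siegel's mean value theorem gives \(\E_{\mu_n}N_L(R)=V(R)\). I will use \eqref{eq:rogers-variance} in the form \(\E_{\mu_n}(N_L(R)-V(R))^2\le C_R V(R)\) for \(n\ge3\). For \(u>0\) let \(\rho(u)\) be the radius with \(V(\rho(u))=u\). Put \(t=t(L)=V(\lambda_1(L))\). By scale-invariance, \(r^n=V(r\lambda_1)/V(\lambda_1)\), so \(\gamma(L)>\theta(1+\eta)s\) means that some \(R\ge\lambda_1(L)\) satisfies
\[
N_L(R)>\theta(1+\eta)\,\frac{s}{t}\,V(R).
\]

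\textbf{Step 1: discard lattices with a long shortest vector.} If \(t\ge s\), then \(N_L(\rho(s))=0\), which is a deviation of size \(s\) from the mean. Chebyshev gives \(\mu_n\{t\ge s\}\le C_R s/s^2=C_R/s\). This is the ``\(1\)'' term in \eqref{eq:main-tail-parametric}.

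\textbf{Step 2: on \(\{t<s\}\), reduce to a fixed geometric grid of radii.} Here \(s/t>1\), so a bad radius \(R\ge\lambda_1\) satisfies \(N_L(R)>\theta(1+\eta)V(R)\). Consider the grid \(u_k=\theta^k s\) for \(k\ge0\), and split into two cases.
\begin{itemize}
\item If \(V(R)\ge s\), choose \(k\ge0\) with \(u_k\in[V(R),\theta V(R))\). Monotonicity of \(N_L\) gives \(N_L(\rho(u_k))\ge N_L(R)>\theta(1+\eta)V(R)>(1+\eta)u_k\).
\item If \(V(R)<s\), use \(V(R)\ge t\). Then \(N_L(\rho(s))\ge N_L(R)>\theta(1+\eta)(s/t)V(R)\ge\theta(1+\eta)s>(1+\eta)u_0\).
\end{itemize}
In both cases the lattice lies in \(\bigcup_{k\ge0}\{N_L(\rho(u_k))>(1+\eta)u_k\}\). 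The random scale \(\lambda_1\) has disappeared, and only countably many fixed radii remain. Measurability is therefore automatic for the union; for \(\gamma\) itself it is handled by restricting the supremum to rational \(r\) via right-continuity.

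\textbf{Step 3: sum the Chebyshev bounds.} Each event in the union has probability at most \(C_R u_k/(\eta u_k)^2=C_R/(\eta^2\theta^k s)\). Summing the geometric series over \(k\ge0\) gives \(C_R/(\eta^2 s(1-\theta^{-1}))\). Adding Step 1 yields \eqref{eq:main-tail-parametric}. With \(\theta=2\) and \(\eta=1\) this is \eqref{eq:main-tail-four}. Setting \(s=T/4\) gives \(\mu_n\{\gamma>T\}\le 12C_R/T\), and the bound \(\min\{1,\cdot\}\) is trivial.

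\textbf{Main obstacle.} The delicate point is Step 2: the threshold in the definition of \(\gamma\) is self-normalized by the random quantity \(t=V(\lambda_1)\), and it must be converted into deviation events at deterministic radii whose volumes are at least of order \(s\). A naive grid starting near \(V(\lambda_1)\), which can be tiny, would give a divergent sum of order \(s\) rather than \(1/s\). The fix is twofold. First, restrict to \(t<s\) at cost \(C_R/s\), so the factor \(s/t>1\) only helps. Second, absorb all small radii \(V(R)<s\) into the single radius \(\rho(s)\), using \(V(R)\ge t\) to keep the threshold at least \(\theta(1+\eta)s\). Once this bookkeeping is in place, the rest is Chebyshev plus a geometric series.
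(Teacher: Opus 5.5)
Your proof is correct and is the paper's argument in contrapositive form. The paper shows that on the good event $\calE_n(s,\theta,\eta)$ (namely $M_L(s)>0$ and $M_L(\theta^js)\le(1+\eta)\theta^js$ for all $j\ge0$) one has $\gamma(L)\le\theta(1+\eta)s$, using exactly your two-zone split at the pivot volume $s$, and then bounds the complement by the same Chebyshev estimates and geometric series. The only slip is the boundary case $t=s$, where closed balls give $N_L(\rho(s))=\tau(L)\ge2$ rather than $0$; split into $t>s$ and $t\le s$ instead, which costs nothing because Step~2 only uses $s/t\ge1$.
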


The theorem is dimension-uniform.  Consequently the subexponential statement is not close to the natural scale of the random problem.

\begin{corollary}[Resolution of the Haar--Siegel version of the SICOMP conjecture]\label{cor:whp}
Let \(a_n\to\infty\).  If \(L_n\sim\mu_n\), then
\begin{equation}\label{eq:any-an}
        \mu_n\{L_n:\gamma(L_n)>a_n\}\le C_\gamma a_n^{-1}=o(1).
\end{equation}
In particular, for every positive sequence \(\eps_n\) with \(\eps_n n\to\infty\),
\begin{equation}\label{eq:subexp-whp}
        \mu_n\{L_n:\gamma(L_n)>2^{\eps_n n}\}
        \le C_\gamma 2^{-\eps_n n}=o(1),
\end{equation}
and hence
\begin{equation}\label{eq:gamma-subexp-whp}
        \gamma(L_n)=2^{o(n)}
        \qquad\hbox{with Haar--Siegel high probability.}
\end{equation}
Moreover, in the independent product model,
\begin{equation}\label{eq:bc-exp-root}
        \gamma(L_n)\le \exp(\sqrt n)
        =2^{o(n)}
        \qquad\hbox{eventually almost surely.}
\end{equation}
The stronger polynomial event
\begin{equation}\label{eq:bc-poly}
        \gamma(L_n)\le n^2
        \qquad\hbox{eventually almost surely}
\end{equation}
also holds in the independent product model.
\end{corollary}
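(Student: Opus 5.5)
This corollary follows from Theorem~\ref{thm:main-tail} together with Borel--Cantelli; I do not expect any new geometry of numbers to be needed.

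\emph{High-probability statements.} For \eqref{eq:any-an}, apply \eqref{eq:main-tail} with \(T=a_n\) for each \(n\ge3\). This gives \(\mu_n\{\gamma>a_n\}\le C_\gamma a_n^{-1}\), and the right-hand side tends to \(0\) because \(a_n\to\infty\). For \eqref{eq:subexp-whp}, specialize to \(a_n=2^{\eps_n n}\); this sequence tends to infinity exactly because \(\eps_n n\to\infty\). To reach \eqref{eq:gamma-subexp-whp}, I will fix one sequence with \(\eps_n\to0\) and \(\eps_n n\to\infty\), say \(\eps_n=n^{-1/2}\). Then \(\gamma(L_n)\le 2^{\eps_n n}=2^{o(n)}\) outside an event of probability at most \(C_\gamma 2^{-\sqrt n}\to0\). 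The point to state explicitly is the meaning of ``\(\gamma(L_n)=2^{o(n)}\) with high probability'': there is a deterministic sequence \(\eps_n\to0\) such that \(\mu_n\{\gamma(L_n)\le 2^{\eps_n n}\}\to1\). The choice above supplies such a sequence.

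\emph{Almost-sure statements.} In the independent product model, set \(E_n=\{\gamma(L_n)>\exp(\sqrt n)\}\). Each \(E_n\) depends only on the coordinate \(L_n\), so \(\Prob(E_n)=\mu_n\{\gamma>\exp(\sqrt n)\}\le C_\gamma e^{-\sqrt n}\). Since \(\sum_n e^{-\sqrt n}<\infty\) (for instance because \(e^{-\sqrt n}\le 720\,n^{-3}\), from \(e^{x}\ge x^6/6!\)), the first Borel--Cantelli lemma gives that almost surely only finitely many \(E_n\) occur. That is exactly \eqref{eq:bc-exp-root}, and \(\exp(\sqrt n)=2^{o(n)}\) because \(\sqrt n/n\to0\). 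For \eqref{eq:bc-poly}, take instead \(E_n'=\{\gamma(L_n)>n^2\}\). Then \(\Prob(E_n')\le C_\gamma n^{-2}\), which is summable, so Borel--Cantelli again applies. Only the first Borel--Cantelli lemma is used, so independence is not actually needed; I will remark on this.

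\emph{Measurability.} The one point needing care is that \(\{\gamma>T\}\) is measurable. This is implicit in Theorem~\ref{thm:main-tail}, but I will justify it briefly. The function \(r\mapsto N_L(r\lambda_1(L))\) is a nondecreasing step function that is right-continuous, while \(r^n\) is continuous. Hence the supremum in \eqref{eq:gamma-sup-intro} can be taken over rational \(r\ge1\). Each map \(L\mapsto N_L(r\lambda_1(L))\) is Borel, because \(\lambda_1\) is continuous on \(X_n\) and the counting function is upper semicontinuous in the appropriate sense. A countable supremum of Borel functions is Borel.

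Apart from this measurability check, every step is a direct substitution into the tail bound \eqref{eq:main-tail}.
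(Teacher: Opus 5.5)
Your proposal is correct and follows the paper's proof: both apply \eqref{eq:main-tail} with \(T=a_n\) (and \(T=2^{\eps_n n}\)) for the high-probability claims, then use the first Borel--Cantelli lemma with the summable bounds \(C_\gamma e^{-\sqrt n}\) and \(C_\gamma n^{-2}\) for the eventual almost-sure claims. Your additions are accurate supplements rather than a different route: the explicit choice \(\eps_n=n^{-1/2}\), the remark that independence is not needed, and the rational-supremum measurability check, which the paper carries out separately over rational volumes.
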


\begin{proof}
The high-probability assertions are immediate from \eqref{eq:main-tail}.  For \eqref{eq:bc-exp-root}, use \eqref{eq:main-tail} with \(T=\exp(\sqrt n)\) and the convergence
\[
        \sum_{n\ge3}\exp(-\sqrt n)<\infty,
        \qquad
        \int_0^\infty e^{-\sqrt x}\,dx
        =\int_0^\infty 2u e^{-u}\,du=2.
\]
For \eqref{eq:bc-poly}, use \(T=n^2\) and
\[
        \sum_{n\ge3} n^{-2}<\infty.
\]
The Borel--Cantelli lemma gives the eventual almost-sure conclusions.
\end{proof}

\subsection{Algorithmic interpretation}

Theorem \ref{thm:main-tail} feeds directly into the \(\gamma\)-sensitive complexity analysis of \cite{aggarwal2025}.  If
\[
        b(L)=\log_2\beta(L)=\frac{1}{n}\log_2\gamma(L),
\]
then \(\gamma(L)\le a_n\) gives
\begin{equation}\label{eq:b-bound}
        b(L)
        \le \frac{\log_2 a_n}{n}.
\end{equation}
Taking \(a_n=n^2\) gives the explicit high-probability estimate
\begin{equation}\label{eq:beta-poly}
        \beta(L_n)\le n^{2/n}
        =\exp\!\left(\frac{2\log n}{n}\right)
        =1+O\!\left(\frac{\log n}{n}\right)
\end{equation}
with probability at least \(1-O(n^{-2})\).  Taking \(a_n=\exp(\sqrt n)\) gives
\begin{equation}\label{eq:beta-root}
        \beta(L_n)\le \exp(n^{-1/2})=1+O(n^{-1/2})
\end{equation}
with probability at least \(1-O(e^{-\sqrt n})\).  Thus the condition
\begin{equation}\label{eq:condition-from-akks}
        \beta(L_n)^n=\gamma(L_n)=2^{o(n)}
\end{equation}
that underlies the improved complexity exponents in \cite{aggarwal2025} holds for Haar--Siegel random lattices with high probability, and even eventually almost surely along independent dimensions.

\begin{corollary}[Random-lattice specialization of the \(\gamma\)-sensitive SVP bounds]\label{cor:algorithm}
Let \(L_n\sim\mu_n\).  With probability at least \(1-O(e^{-\sqrt n})\), the \(\gamma\)-dependent algorithms of \cite{aggarwal2025} may be instantiated in the regime \(\beta(L_n)=1+O(n^{-1/2})\).  In particular the subexponential-\(\gamma\) running-time consequences of that analysis hold for Haar--Siegel random lattices:
\begin{align}\label{eq:algorithmic-times}
        T_{\rm cl}(L_n)&=2^{1.292n+o(n)},
        & S_{\rm cl}(L_n)&=2^{0.5n+o(n)},\\
        T_{\rm q}(L_n)&=2^{0.750n+o(n)},
        & S_{\rm q}(L_n)&=2^{0.5n+o(n)},\\
        T_{\rm qram}(L_n)&=2^{0.667n+o(n)}.
\end{align}
The last line is in the \(\QRAM\) model; any additional resource exponents stated in the cited \(\gamma\)-sensitive theorem are obtained by the same substitution \(b=o(1)\).
\end{corollary}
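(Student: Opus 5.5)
The statement is a specialization: once Theorem \ref{thm:main-tail} gives \(\gamma(L_n)=2^{o(n)}\) with the stated probability, we substitute this into the \(\gamma\)-sensitive complexity theorem of \cite{aggarwal2025}. The plan has three steps.

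\textbf{Step 1: the probability bound.} Apply \eqref{eq:main-tail} with \(T=\exp(\sqrt n)\). This gives
\[
\mu_n\{L_n:\gamma(L_n)>e^{\sqrt n}\}\le C_\gamma e^{-\sqrt n}.
\]
Take the good event \(\calG_n=\{\gamma(L_n)\le e^{\sqrt n}\}\). It has probability at least \(1-O(e^{-\sqrt n})\), uniformly in \(n\ge3\), and this uniformity is what makes the \(O(\cdot)\) in the statement honest.

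\textbf{Step 2: translate to \(\beta\) and \(b\).} On \(\calG_n\), \eqref{eq:beta-def} gives
\[
\beta(L_n)=\gamma(L_n)^{1/n}\le e^{n^{-1/2}}=1+O(n^{-1/2}),
\]
which is \eqref{eq:beta-root}. Equivalently,
\[
b(L_n)=\tfrac1n\log_2\gamma(L_n)\le n^{-1/2}\log_2 e=o(1),
\]
by \eqref{eq:b-bound}. We also need \(b\ge0\). This holds because \(\gamma(L)\ge\tau(L)\ge2\): the case \(r=1\) in \eqref{eq:gamma-sup-intro} counts the pair \(\pm x\) of shortest vectors. So \(b(L_n)\in[0,n^{-1/2}\log_2 e]\) on \(\calG_n\).

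\textbf{Step 3: insert into the cited bounds.} The complexity bounds of \cite{aggarwal2025} are stated with exponents of the form \(c_i n + f_i(b)n + o(n)\), where each \(f_i\) is continuous with \(f_i(0)=0\). At \(b=0\) these exponents reduce to the constants \(1.292\), \(0.5\), \(0.750\), \(0.5\), \(0.667\). Since \(b=b(L_n)\to0\) deterministically on \(\calG_n\), continuity gives \(f_i(b)n=o(n)\). This yields \eqref{eq:algorithmic-times} on \(\calG_n\), and the same substitution applies to any further resource exponents in that theorem.

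\textbf{Main obstacle.} The delicate point is Step 3. The cited \(\gamma\)-sensitive theorem must be used in a form where the \(o(n)\) error is uniform in \(b\), or at least valid along a sequence \(b_n\to0\), rather than only for a fixed value of \(b\). If that theorem is stated only for fixed \(b\), I would first check whether its exponents are explicit continuous functions of \(\beta\) appearing in a \(2^{(\cdots)n+o(n)}\) bound with a \(\beta\)-independent \(o(n)\). Failing that, I would use monotonicity in \(\beta\): bound the running time at \(\beta(L_n)\) by the running time at \(\beta=2^{\delta}\) for each fixed \(\delta>0\), then let \(\delta\downarrow0\) by a diagonal argument.

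Steps 1 and 2 are routine given Theorem \ref{thm:main-tail}.
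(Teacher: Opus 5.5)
Your proposal is correct and follows the same route as the paper: apply \eqref{eq:main-tail} with \(T=e^{\sqrt n}\) to get \(\beta(L_n)\le e^{n^{-1/2}}\), i.e.\ \(b(L_n)=O(n^{-1/2})=o(1)\), with probability \(1-O(e^{-\sqrt n})\), and then substitute into the cited exponent functions using their continuity at \(b=0\). Your remark that the cited \(o(n)\) term must hold uniformly along \(b_n\to0\) (or be recovered by monotonicity in \(\beta\) plus a diagonal argument) is a point the paper passes over by simply asserting continuity, so it is a useful extra precaution rather than a different approach.
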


\begin{remark}
Corollary \ref{cor:algorithm} does not reanalyze the bounded-distance-decoding or discrete-Gaussian components of \cite{aggarwal2025}.  It supplies the missing random-lattice estimate for the geometric input \(\gamma(L)\).  The reduction from \eqref{eq:main-tail} to \eqref{eq:algorithmic-times} is exactly the substitution
\[
        \gamma(L_n)=2^{o(n)},
        \qquad
        \beta(L_n)=1+o(1),
        \qquad
        b(L_n)=o(1),
\]
into the complexity functions of \cite{aggarwal2025} at \(b=0\).
\end{remark}

\subsection{The mechanism of the proof}

The main proof has one probabilistic input.  For every bounded Borel set \(A\subset\R^n\), the Siegel transform
\begin{equation}\label{eq:siegel-transform-intro}
        \wh{\1_A}(L)=\sum_{x\in L\Nonzero}\1_A(x)
\end{equation}
has mean \(\vol(A)\).  Rogers's second moment theorem, in the form used by Schmidt, gives for \(n\ge3\)
\begin{equation}\label{eq:variance-intro}
        \int_{X_n}\left(\wh{\1_A}(L)-\vol(A)\right)^2\dn
        \le C_R\vol(A)
\end{equation}
with an absolute constant \(C_R\).  The core of the argument is the following deterministic observation.

Let \(\Ball(V)\) be the centered Euclidean ball in \(\R^n\) of volume \(V\), and let
\begin{equation}\label{eq:MV-intro}
        M_L(V)=\#(L\Nonzero\cap \Ball(V)).
\end{equation}
Suppose for some \(V_0=s\) that
\begin{equation}\label{eq:good-event-intro}
        M_L(V_0)>0,
        \qquad
        M_L(\theta^jV_0)\le (1+\eta)\theta^jV_0
        \quad(j=0,1,2,\ldots).
\end{equation}
The first inequality says
\begin{equation}\label{eq:first-nonempty}
        \vol(B_{\lambda_1(L)})\le V_0.
\end{equation}
The second says that all dyadic, or more generally \(\theta\)-adic, larger balls have ordinary point counts of the expected order.  By monotonicity, every intermediate volume \(V\ge V_0\) satisfies
\begin{equation}\label{eq:intermediate-monotone}
        M_L(V)\le \theta(1+\eta)V.
\end{equation}
For \(V\ge V_0\), the profile \(\gamma(L)\) may be rewritten as
\begin{equation}\label{eq:gamma-volume-intro}
        \frac{N_L(r\lambda_1(L))}{r^n}
        =M_L(V)\frac{\vol(B_{\lambda_1(L)})}{V},
        \qquad
        V=\vol(B_{r\lambda_1(L)}).
\end{equation}
Thus \eqref{eq:first-nonempty} and \eqref{eq:intermediate-monotone} imply
\begin{equation}\label{eq:gamma-deterministic-intro}
        \frac{N_L(r\lambda_1(L))}{r^n}
        \le \theta(1+\eta)V_0.
\end{equation}
For \(V\le V_0\), monotonicity gives
\begin{equation}\label{eq:small-volume-intro}
        M_L(V)\le M_L(V_0)
        \le (1+\eta)V_0
        \le \theta(1+\eta)V_0.
\end{equation}
This proves \(\gamma(L)\le\theta(1+\eta)V_0\) on the event \eqref{eq:good-event-intro}.  The probability of the complement of \eqref{eq:good-event-intro} is controlled by \eqref{eq:variance-intro} and the summable series
\begin{equation}\label{eq:geom-series-intro}
        \sum_{j\ge0}\frac{1}{\theta^jV_0}
        =\frac{1}{V_0(1-\theta^{-1})}.
\end{equation}
This is the whole argument.  The rest of the paper records the definitions and the estimates carefully.

\section{Notation and volume normalization}

Throughout, \(n\ge3\) unless explicitly stated.  Let
\begin{equation}\label{eq:unit-ball-volume}
        \kappa_n=\vol\{x\in\R^n:\norm{x}\le1\}
        =\frac{\pi^{n/2}}{\Gamma(1+n/2)}.
\end{equation}
For \(R\ge0\), write
\begin{equation}\label{eq:BR}
        B_R=\{x\in\R^n:\norm{x}\le R\},
        \qquad
        \vol(B_R)=\kappa_n R^n.
\end{equation}
It is technically cleaner to parametrize balls by volume rather than radius:
\begin{equation}\label{eq:ball-volume}
        \Ball(V)=B_{\rho_n(V)},
        \qquad
        \rho_n(V)=\left(\frac{V}{\kappa_n}\right)^{1/n},
        \qquad
        \vol\Ball(V)=V.
\end{equation}
For \(L\in X_n\), define
\begin{equation}\label{eq:M-def}
        M_L(V)=N_L(\rho_n(V))=
        \#\{x\in L\Nonzero:x\in \Ball(V)\}.
\end{equation}
The shortest-vector volume is
\begin{equation}\label{eq:Vlambda-def}
        \Vlam(L)=\kappa_n\lambda_1(L)^n.
\end{equation}
Then
\begin{equation}\label{eq:Vlambda-characterization}
        \Vlam(L)=\inf\{V>0:M_L(V)>0\}.
\end{equation}
With the closed-ball convention this infimum is attained:
\begin{equation}\label{eq:M-zero-positive}
        M_L(V)=0\quad(0\le V<\Vlam(L)),
        \qquad
        M_L(\Vlam(L))=\tau(L)\ge2.
\end{equation}
The inequality \(\tau(L)\ge2\) follows because \(x\in L\) implies \(-x\in L\).  This convention counts oriented nonzero lattice vectors; replacing \(x\) and \(-x\) by one unoriented pair changes constants only by a factor of two.

\begin{lemma}[Volume form of \(\gamma\)]\label{lem:volume-gamma}
For every \(L\in X_n\),
\begin{equation}\label{eq:gamma-volume}
        \gamma(L)=\sup_{V\ge\Vlam(L)} M_L(V)\frac{\Vlam(L)}{V}.
\end{equation}
Equivalently, with \(V=r^n\Vlam(L)\),
\begin{equation}\label{eq:r-to-volume}
        r=\left(\frac{V}{\Vlam(L)}\right)^{1/n},
        \qquad
        M_L(V)=N_L(r\lambda_1(L)),
        \qquad
        r^n=\frac{V}{\Vlam(L)}.
\end{equation}
\end{lemma}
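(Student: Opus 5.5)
The plan is to prove the identity by a direct change of variables between the radius parameter \(r\ge1\) in \eqref{eq:gamma-sup-intro} and the volume parameter \(V\ge\Vlam(L)\), using only the definitions \eqref{eq:ball-volume}, \eqref{eq:M-def}, \eqref{eq:Vlambda-def}. First I would note that \(\lambda_1(L)>0\) for any lattice, so \(\Vlam(L)=\kappa_n\lambda_1(L)^n\in(0,\infty)\). Then the map \(\phi:[1,\infty)\to[\Vlam(L),\infty)\), \(\phi(r)=r^n\Vlam(L)\), is a strictly increasing bijection with inverse \(r=(V/\Vlam(L))^{1/n}\), which gives the first relation in \eqref{eq:r-to-volume}.

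Next, for \(V=\phi(r)\) I would compute the radius of \(\Ball(V)\):
\[
        \rho_n(V)=\left(\frac{r^n\kappa_n\lambda_1(L)^n}{\kappa_n}\right)^{1/n}=r\lambda_1(L),
\]
so by \eqref{eq:M-def} we get \(M_L(V)=N_L(\rho_n(V))=N_L(r\lambda_1(L))\), the second relation in \eqref{eq:r-to-volume}. The third relation \(r^n=V/\Vlam(L)\) is just the definition of \(\phi\). Combining these,
\[
        \frac{N_L(r\lambda_1(L))}{r^n}=M_L(V)\,\frac{\Vlam(L)}{V}
\]
for each corresponding pair \((r,V)\).

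Finally, since \(\phi\) is a bijection between the index sets, the two families of values coincide as sets, and so their suprema agree:
\[
        \gamma(L)=\sup_{r\ge1}\frac{N_L(r\lambda_1(L))}{r^n}=\sup_{V\ge\Vlam(L)}M_L(V)\frac{\Vlam(L)}{V},
\]
using \eqref{eq:gamma-sup-intro}. For completeness I would also remark that \eqref{eq:gamma-def-intro} and \eqref{eq:gamma-sup-intro} agree: the set of admissible \(\Gamma\) is exactly \([\sup_r N_L(r\lambda_1)/r^n,\infty)\). This holds with the convention \(\inf\emptyset=\infty\), and the supremum is finite by standard lattice point counting, though finiteness is not needed for the identity.

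There is no real obstacle. The only point needing care is that the closed-ball convention is used consistently in \(N_L\) and \(\Ball(V)\), so that no boundary vectors are gained or lost under the reparametrization.
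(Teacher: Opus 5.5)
Your proposal is correct and matches the paper's proof: both set \(V=r^n\Vlam(L)=\vol(B_{r\lambda_1(L)})\), observe \(M_L(V)=N_L(r\lambda_1(L))\), and use that \(r\mapsto V\) maps \([1,\infty)\) bijectively onto \([\Vlam(L),\infty)\), so the two suprema coincide. Your extra remarks (positivity of \(\lambda_1(L)\), agreement of the infimum and supremum definitions of \(\gamma\)) are harmless additions that the paper leaves implicit.
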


\begin{proof}
For \(r\ge1\), put \(V=\vol(B_{r\lambda_1(L)})\).  By \eqref{eq:BR} and \eqref{eq:Vlambda-def},
\[
        V=\kappa_n r^n\lambda_1(L)^n=r^n\Vlam(L),
        \qquad
        r^n=V/\Vlam(L).
\]
Moreover \(N_L(r\lambda_1(L))=M_L(V)\).  Hence
\[
        \frac{N_L(r\lambda_1(L))}{r^n}
        =M_L(V)\frac{\Vlam(L)}{V}.
\]
As \(r\) ranges over \([1,\infty)\), \(V\) ranges over \([\Vlam(L),\infty)\), proving \eqref{eq:gamma-volume}.
\end{proof}

The variable \(\Vlam(L)\) is the random denominator in \eqref{eq:gamma-volume}.  The proof of Theorem \ref{thm:main-tail} does not try to estimate it sharply.  It only forces
\begin{equation}\label{eq:pivot-short}
        \Vlam(L)\le V_0
\end{equation}
by requiring \(M_L(V_0)>0\).  This modest inequality is enough because, for \(V\ge V_0\),
\begin{equation}\label{eq:profile-upper-idea}
        M_L(V)\frac{\Vlam(L)}{V}
        \le M_L(V)\frac{V_0}{V}.
\end{equation}
Thus any uniform-in-\(V\) estimate of the form
\begin{equation}\label{eq:ordinary-profile-bound}
        M_L(V)\le A V\qquad(V\ge V_0)
\end{equation}
converts immediately into
\begin{equation}\label{eq:gamma-from-ordinary}
        \gamma(L)\le A V_0,
\end{equation}
up to the separately controlled range \(\Vlam(L)\le V\le V_0\).

\section{Rogers--Schmidt input}

For a nonnegative Borel function \(f:\R^n\to\R_{\ge0}\), its Siegel transform is
\begin{equation}\label{eq:siegel-transform}
        \widehat f(L)=\sum_{x\in L\Nonzero} f(x),
        \qquad L\in X_n.
\end{equation}
When \(f=\1_A\), this is the nonzero lattice-point count
\begin{equation}\label{eq:count-as-siegel}
        \widehat{\1_A}(L)=\#(L\Nonzero\cap A).
\end{equation}
The first moment is Siegel's mean value formula \cite{siegel1945},
\begin{equation}\label{eq:siegel-mean}
        \int_{X_n}\widehat f(L)\dn=\int_{\R^n}f(x)\dvol,
\end{equation}
valid for \(f\in L^1(\R^n)\), \(f\ge0\).  The second-moment form needed here is the following standard consequence of Rogers's mean value theorem in Schmidt's counting formulation.

\begin{theorem}[Rogers--Schmidt \(L^2\) estimate]\label{thm:rogers}
There is an absolute constant \(C_R<\infty\) such that, for every \(n\ge3\) and every bounded Borel set \(A\subset\R^n\),
\begin{equation}\label{eq:rogers-variance}
        \int_{X_n}\left(\#(L\Nonzero\cap A)-\vol(A)\right)^2\dn
        \le C_R\vol(A).
\end{equation}
A standard stronger formulation, not needed below except for context, is that for every \(f:\R^n\to[0,1]\) with compact support,
\begin{equation}\label{eq:rogers-general}
        \int_{X_n}\left(\widehat f(L)-\int_{\R^n}f\right)^2\dn
        \le C_R\int_{\R^n}f(x)^2\dvol,
\end{equation}
after possibly increasing the absolute constant \(C_R\).
\end{theorem}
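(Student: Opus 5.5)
The plan is to derive \eqref{eq:rogers-variance} from Rogers's second mean value theorem on \(X_n\). Then bound the resulting correction terms by a series whose sum is at most an absolute constant once \(n\ge3\). The first input is Rogers's formula, valid for \(n\ge3\) and nonnegative Borel \(f\) with compact support:
\[
        \int_{X_n}\widehat f(L)^2\dn
        =\left(\int_{\R^n}f\right)^2
        +\sum_{q\ge1}\ \sum_{\substack{p\in\Z\Nonzero\\ \gcd(p,q)=1}}\int_{\R^n}f(px)f(qx)\dvol .
\]
This is the form in which the sum runs over all nonzero lattice vectors, so no \(\zeta(n)\) normalizations appear. Its content is that \(\int\widehat f^{\,2}\) splits into pairs \((x,y)\) of linearly independent vectors, which contribute \((\int f)^2\), and pairs of proportional vectors \(qy=px\), which contribute the double sum. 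Combined with Siegel's mean value formula \eqref{eq:siegel-mean}, expanding the square gives
\[
\int_{X_n}\Big(\widehat f-\int f\Big)^2\dn=\sum_{q\ge1}\sum_{\gcd(p,q)=1}\int f(px)f(qx)\dvol .
\]

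The second step bounds each term in this sum. For \(f\) with values in \([0,1]\), Cauchy--Schwarz and the substitution \(x\mapsto x/|p|\) give
\[
        \int f(px)f(qx)\dvol\le \Big(\int f(px)^2\dvol\Big)^{1/2}\Big(\int f(qx)^2\dvol\Big)^{1/2}
        =\frac{\int f^2}{(|p|q)^{n/2}} .
\]
The remaining sum satisfies
\[
\sum_{q\ge1}\sum_{p\ne0}(|p|q)^{-n/2}= 2\zeta(n/2)^2\le 2\zeta(3/2)^2 \quad (n\ge3).
\]
This bound is absolute, which yields \eqref{eq:rogers-general}. For the set form \eqref{eq:rogers-variance} with \(f=\1_A\), I would use the sharper elementary bound \(\int \1_A(px)\1_A(qx)\dvol\le \vol(A)\max(|p|,q)^{-n}\). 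Coprime pairs with \(\max(|p|,q)=m\) number at most \(4m\), so the sum is at most
\[
\vol(A)\sum_{m\ge1}4m^{1-n}\le 4\zeta(2)\vol(A).
\]
For \(A=-A\), the terms \((p,q)=(\pm1,1)\) alone contribute \(2\vol(A)\), which reflects the pairing \(x\leftrightarrow -x\). This shows that a constant of order \(C_R\ge2\) is unavoidable and harmless. Boundedness of \(A\) ensures compact support, so Rogers's formula applies directly without any truncation.

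The main obstacle is the first step itself, namely Rogers's formula with the correct diagonal terms. One cannot use it for \(n=2\), where the formula has a different shape and the proportional-pair series no longer has a uniform bound. I would not reprove it. I would quote Rogers's mean value theorem for \(k=2\) in Schmidt's formulation, and check two things carefully: that the normalization is by the Haar probability measure \(\mu_n\), and that the sum is over all nonzero vectors rather than primitive ones. Everything after that is the elementary summation above, and it is uniform in \(n\ge3\) because the exponents only improve as \(n\) grows.
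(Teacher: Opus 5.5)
Your proposal is correct and takes the same route as the paper. The paper cites Rogers's second-moment formula rather than reproving it, and in its section on the Rogers input in the indicator case it bounds each proportional-pair term by $\vol(A)\min\{q^{-n},|p|^{-n}\}$, noting that the resulting series is bounded uniformly for $n\ge3$ by its value at $n=3$; this is exactly your $\max(|p|,q)^{-n}$ estimate. You are more explicit than the paper, since you derive the variance identity from Rogers plus Siegel with the all-nonzero-vector normalization and obtain the concrete constants $4\zeta(2)$ for sets and $2\zeta(3/2)^2$ for the general $f$ form via Cauchy--Schwarz, which the paper only asserts.
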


\begin{remark}
The restriction \(n\ge3\) is essential for a dimension-uniform statement of this elementary form.  Dimension two has logarithmic divergences in closely related Rogers identities.  Since the conjectural use in \cite{aggarwal2025} concerns the asymptotic regime \(n\to\infty\), the omission of \(n=2\) is immaterial.
\end{remark}

For reference, we recall how \eqref{eq:rogers-variance} specializes to volume-parametrized balls.  Since \(\vol\Ball(V)=V\), \eqref{eq:rogers-variance} gives
\begin{equation}\label{eq:M-mean-var}
        \E_{\mu_n}M_L(V)=V,
        \qquad
        \E_{\mu_n}\left(M_L(V)-V\right)^2\le C_R V.
\end{equation}
Chebyshev's inequality therefore yields two estimates that will be used repeatedly:
\begin{align}\label{eq:empty-cheb}
        \mu_n\{L:M_L(V)=0\}
        &\le \mu_n\{|M_L(V)-V|\ge V\}
          \le \frac{C_R}{V},\\
\label{eq:upper-cheb}
        \mu_n\{L:M_L(V)>(1+\eta)V\}
        &\le \mu_n\{M_L(V)-V>\eta V\}
          \le \frac{C_R}{\eta^2V}
          \qquad(\eta>0).
\end{align}
The proof of Theorem \ref{thm:main-tail} uses no distributional information beyond \eqref{eq:empty-cheb} and \eqref{eq:upper-cheb}.

\section{A dyadic maximal lemma}

The next lemma isolates the deterministic part of the argument.  It is stated with a general ratio \(\theta>1\) because the same proof gives the optimized tail family \eqref{eq:main-tail-parametric}.

\begin{definition}[Good pivot event]\label{def:good-event}
Fix \(s>0\), \(\theta>1\), and \(\eta>0\).  Define
\begin{equation}\label{eq:good-event}
        \calE_n(s,\theta,\eta)=
        \left\{L\in X_n:
        M_L(s)>0\ \hbox{and}\
        M_L(\theta^j s)\le(1+\eta)\theta^j s\ \hbox{for all }j\ge0
        \right\}.
\end{equation}
\end{definition}

The event \(\calE_n(s,\theta,\eta)\) says that the first nonzero lattice vector appears no later than volume \(s\), and that the ordinary counting function is under control on the geometric grid
\begin{equation}\label{eq:grid}
        s,\ \theta s,\ \theta^2s,\ \theta^3s,\ldots.
\end{equation}
The word ``ordinary'' is important: \(M_L(V)\) is not normalized by \(\lambda_1(L)\).  The self-normalization happens later through \eqref{eq:gamma-volume}.

\begin{lemma}[Deterministic pivot lemma]\label{lem:deterministic-pivot}
If \(L\in\calE_n(s,\theta,\eta)\), then
\begin{equation}\label{eq:pivot-conclusion}
        \gamma(L)\le \theta(1+\eta)s.
\end{equation}
\end{lemma}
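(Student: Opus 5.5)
The plan is to start from the volume form of \(\gamma\) in Lemma \ref{lem:volume-gamma},
\[
\gamma(L)=\sup_{V\ge\Vlam(L)} M_L(V)\frac{\Vlam(L)}{V},
\]
and bound each term uniformly in \(V\ge\Vlam(L)\) by \(\theta(1+\eta)s\). The first condition in \(\calE_n(s,\theta,\eta)\), namely \(M_L(s)>0\), combined with the characterization \eqref{eq:Vlambda-characterization}, gives \(\Vlam(L)\le s\). Since \(\Vlam(L)\le s\) and \(V\ge \Vlam(L)\), I will split the supremum into two ranges: \(\Vlam(L)\le V\le s\) and \(V\ge s\).

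For the range \(V\ge s\), I choose the unique \(j\ge0\) with \(\theta^j s\le V<\theta^{j+1}s\). Monotonicity of \(V\mapsto M_L(V)\) (the balls \(\Ball(V)\) are nested) and the grid condition at index \(j+1\) give
\[
M_L(V)\le M_L(\theta^{j+1}s)\le(1+\eta)\theta^{j+1}s<\theta(1+\eta)V.
\]
Multiplying by \(\Vlam(L)/V\le s/V\) yields
\[
M_L(V)\Vlam(L)/V\le \theta(1+\eta)s.
\]

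For the range \(\Vlam(L)\le V\le s\), I use monotonicity together with the \(j=0\) grid condition:
\[
M_L(V)\le M_L(s)\le(1+\eta)s.
\]
Since \(\Vlam(L)/V\le1\) in this range, the term is at most \((1+\eta)s\le\theta(1+\eta)s\), because \(\theta>1\).

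Taking the supremum over both ranges gives \eqref{eq:pivot-conclusion}. There is no real obstacle here; the only point needing care is the rounding step in the large-\(V\) range. One must round \(V\) \emph{up} to the next grid point, which costs exactly the factor \(\theta\), rather than down. One should also check that the infimum in \eqref{eq:Vlambda-characterization} behaves correctly with the closed-ball convention, so that \(M_L(s)>0\) really does imply \(\Vlam(L)\le s\).
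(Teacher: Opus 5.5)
Your proof is correct and follows the paper's argument essentially step for step. Both use the pivot \(\Vlam(L)\le s\) from \(M_L(s)>0\), split into the ranges \(\Vlam(L)\le V\le s\) and \(V\ge s\), and round \(V\) up to \(\theta^{j+1}s\) at the cost of the factor \(\theta\). One small slip: the bound \((1+\eta)\theta^{j+1}s\le\theta(1+\eta)V\) holds only with \(\le\) (equality when \(V=\theta^j s\)), not \(<\); this is harmless since only the non-strict bound is needed.
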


\begin{proof}
Since \(M_L(s)>0\), the definition \eqref{eq:Vlambda-characterization} gives
\begin{equation}\label{eq:proof-Vlam-le-s}
        \Vlam(L)\le s.
\end{equation}
Let \(V\ge\Vlam(L)\).  We prove
\begin{equation}\label{eq:proof-profile-bound}
        M_L(V)\frac{\Vlam(L)}{V}
        \le \theta(1+\eta)s.
\end{equation}
Taking the supremum over \(V\ge\Vlam(L)\) and using Lemma \ref{lem:volume-gamma} will prove \eqref{eq:pivot-conclusion}.

First suppose \(\Vlam(L)\le V\le s\).  Monotonicity of \(M_L\) and the grid condition at \(j=0\) give
\begin{equation}\label{eq:proof-small-range}
        M_L(V)\le M_L(s)\le(1+\eta)s.
\end{equation}
Since \(\Vlam(L)/V\le1\),
\begin{equation}\label{eq:proof-small-range-2}
        M_L(V)\frac{\Vlam(L)}{V}
        \le (1+\eta)s
        \le \theta(1+\eta)s.
\end{equation}

Now suppose \(V>s\).  Choose \(j=j(V)\ge0\) so that
\begin{equation}\label{eq:j-choice}
        \theta^j s\le V<\theta^{j+1}s.
\end{equation}
Then, by monotonicity and the grid condition at \(j+1\),
\begin{equation}\label{eq:proof-large-M}
        M_L(V)\le M_L(\theta^{j+1}s)
        \le(1+\eta)\theta^{j+1}s.
\end{equation}
Combining \eqref{eq:j-choice} and \eqref{eq:proof-large-M},
\begin{equation}\label{eq:proof-large-M-over-V}
        \frac{M_L(V)}{V}
        \le \frac{(1+\eta)\theta^{j+1}s}{\theta^j s}
        =\theta(1+\eta).
\end{equation}
Using \eqref{eq:proof-Vlam-le-s}, we get
\begin{equation}\label{eq:proof-large-final}
        M_L(V)\frac{\Vlam(L)}{V}
        \le \theta(1+\eta)\Vlam(L)
        \le \theta(1+\eta)s.
\end{equation}
Equations \eqref{eq:proof-small-range-2} and \eqref{eq:proof-large-final} prove \eqref{eq:proof-profile-bound} for all \(V\ge\Vlam(L)\).
\end{proof}

The probabilistic estimate for \(\calE_n(s,\theta,\eta)\) is a one-line union bound, but it is the only place where the summability of the grid enters.

\begin{lemma}[Probability of the pivot event]\label{lem:prob-pivot}
For every \(n\ge3\), \(s>0\), \(\theta>1\), and \(\eta>0\),
\begin{equation}\label{eq:pivot-prob}
        \mu_n\bigl(X_n\setminus\calE_n(s,\theta,\eta)\bigr)
        \le
        \frac{C_R}{s}
        \left(1+\frac{1}{\eta^2(1-\theta^{-1})}\right).
\end{equation}
\end{lemma}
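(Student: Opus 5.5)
The complement of \(\calE_n(s,\theta,\eta)\) is the union of the event \(\{M_L(s)=0\}\) and the events \(\{M_L(\theta^j s)>(1+\eta)\theta^j s\}\) for \(j\ge0\). The plan is to take a countable union bound over these pieces:
\[
        \mu_n\bigl(X_n\setminus\calE_n\bigr)
        \le \mu_n\{M_L(s)=0\}
        +\sum_{j\ge0}\mu_n\{M_L(\theta^js)>(1+\eta)\theta^js\}.
\]
Each term is then bounded by the Chebyshev consequences of Theorem \ref{thm:rogers}, which the paper records as \eqref{eq:empty-cheb} and \eqref{eq:upper-cheb}.

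For the first term, apply \eqref{eq:empty-cheb} with \(V=s\), which gives the bound \(C_R/s\). For the \(j\)-th term, apply \eqref{eq:upper-cheb} with \(V=\theta^j s\), which gives \(C_R/(\eta^2\theta^j s)\). Summing the geometric series as in \eqref{eq:geom-series-intro} gives
\[
        \sum_{j\ge0}\frac{C_R}{\eta^2\theta^js}=\frac{C_R}{\eta^2 s(1-\theta^{-1})}.
\]
Adding this to \(C_R/s\) yields exactly \eqref{eq:pivot-prob}.

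Two points need a brief check.

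\emph{Measurability.} Each event \(\{M_L(V)=0\}\) and \(\{M_L(V)>c\}\) is measurable, because \(M_L(V)\) is the Siegel transform of \(\1_{\Ball(V)}\). The ball \(\Ball(V)\) is a bounded Borel set, so Theorem \ref{thm:rogers} applies to it directly. A countable union of measurable events is measurable, so \(\calE_n\) is measurable.

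\emph{Convergence of the union bound.} Countable subadditivity of \(\mu_n\) handles the infinite union over \(j\), and the series converges because \(\theta>1\). No step here is a genuine obstacle; the only thing requiring care is that the constant in \eqref{eq:empty-cheb} is \(C_R/V\) with no extra factor, which holds because \(M_L(V)=0\) forces \(|M_L(V)-V|=V\).
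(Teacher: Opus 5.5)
Your proposal is correct and matches the paper's proof: you take a countable union bound over \(\{M_L(s)=0\}\) and the grid exceedance events, bound each piece by the Chebyshev consequences \eqref{eq:empty-cheb} and \eqref{eq:upper-cheb} of the Rogers--Schmidt variance estimate, and sum the geometric series in \(\theta^{-j}\). Your extra remarks on measurability and countable subadditivity are harmless additions; the paper treats measurability separately in its section on countable reductions.
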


\begin{proof}
The complement of \(\calE_n(s,\theta,\eta)\) is contained in the union
\begin{equation}\label{eq:complement-union}
        \{M_L(s)=0\}
        \cup
        \bigcup_{j\ge0}\{M_L(\theta^j s)>(1+\eta)\theta^j s\}.
\end{equation}
By \eqref{eq:empty-cheb},
\begin{equation}\label{eq:empty-s}
        \mu_n\{M_L(s)=0\}\le\frac{C_R}{s}.
\end{equation}
By \eqref{eq:upper-cheb}, for every \(j\ge0\),
\begin{equation}\label{eq:grid-cheb}
        \mu_n\{M_L(\theta^j s)>(1+\eta)\theta^j s\}
        \le \frac{C_R}{\eta^2\theta^j s}.
\end{equation}
Hence
\begin{align}\label{eq:union-bound}
        \mu_n(X_n\setminus\calE_n(s,\theta,\eta))
        &\le \frac{C_R}{s}+
        \sum_{j=0}^{\infty}\frac{C_R}{\eta^2\theta^j s}\\
        &=\frac{C_R}{s}
        \left(1+\frac{1}{\eta^2}\sum_{j=0}^{\infty}\theta^{-j}\right)\\
        &=\frac{C_R}{s}
        \left(1+\frac{1}{\eta^2(1-\theta^{-1})}\right).
\end{align}
This is \eqref{eq:pivot-prob}.
\end{proof}

\begin{proof}[Proof of Theorem \ref{thm:main-tail}]
If \(L\in\calE_n(s,\theta,\eta)\), Lemma \ref{lem:deterministic-pivot} gives
\[
        \gamma(L)\le\theta(1+\eta)s.
\]
Therefore
\[
        \{L:\gamma(L)>\theta(1+\eta)s\}
        \subseteq X_n\setminus\calE_n(s,\theta,\eta).
\]
Applying Lemma \ref{lem:prob-pivot} gives \eqref{eq:main-tail-parametric}.  The specialization \(\theta=2\), \(\eta=1\) gives
\[
        \mu_n\{L:\gamma(L)>4s\}
        \le\frac{C_R}{s}\left(1+\frac{1}{1-1/2}\right)
        =\frac{3C_R}{s}.
\]
Putting \(T=4s\), one may take \(C_\gamma=12C_R\) in \eqref{eq:main-tail}.  The additional minimum with \(1\) follows from the trivial bound by total probability.
\end{proof}

\section{Consequences for \texorpdfstring{\(\lambda_1\), \(\tau\), \(\gamma\), and \(\beta\)}{lambda1, tau, gamma, beta}}

The main theorem is a tail statement about \(\gamma\), but the same proof gives a compact hierarchy of typical bounds.  This section records them because they clarify the extent to which \(\gamma(L)=2^{o(n)}\) is stronger than a pure kissing-number assertion.

\subsection{Shortest-vector volume}

The first part of the good event \(M_L(s)>0\) gives a tail bound for \(\Vlam(L)\).

\begin{proposition}[A one-sided tail for \(\Vlam\)]\label{prop:Vlambda-tail}
For every \(n\ge3\) and every \(s>0\),
\begin{equation}\label{eq:Vlambda-tail}
        \mu_n\{L:\Vlam(L)>s\}
        \le \frac{C_R}{s}.
\end{equation}
Equivalently,
\begin{equation}\label{eq:lambda-tail}
        \mu_n\left\{L:\lambda_1(L)>
        \left(\frac{s}{\kappa_n}\right)^{1/n}\right\}
        \le \frac{C_R}{s}.
\end{equation}
\end{proposition}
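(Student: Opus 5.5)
The plan is to reduce the tail of \(\Vlam\) to the emptiness event for a single volume-parametrized ball, and then apply the Chebyshev estimate \eqref{eq:empty-cheb}. The key set-theoretic fact is the inclusion
\[
        \{L:\Vlam(L)>s\}\subseteq\{L:M_L(s)=0\}.
\]
To justify it, I would use the characterization \eqref{eq:Vlambda-characterization}. If \(M_L(s)>0\), then \(s\) belongs to the set whose infimum is \(\Vlam(L)\), so \(\Vlam(L)\le s\). This is exactly the first step \eqref{eq:proof-Vlam-le-s} in the proof of Lemma \ref{lem:deterministic-pivot}. Taking contrapositives gives the inclusion.

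Next I would bound the measure of the emptiness event. By \eqref{eq:empty-cheb}, applied with \(V=s\), we have \(\mu_n\{M_L(s)=0\}\le C_R/s\). That estimate comes from \eqref{eq:M-mean-var}, which is the Rogers--Schmidt variance bound \eqref{eq:rogers-variance} specialized to the bounded Borel set \(\Ball(s)\), whose volume is \(s\). Combining this with the inclusion yields \eqref{eq:Vlambda-tail}.

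For the equivalent form \eqref{eq:lambda-tail}, I would unwind the definition \eqref{eq:Vlambda-def}, namely \(\Vlam(L)=\kappa_n\lambda_1(L)^n\). The inequality \(\Vlam(L)>s\) holds exactly when \(\lambda_1(L)>(s/\kappa_n)^{1/n}=\rho_n(s)\). So the two events coincide, and \eqref{eq:lambda-tail} follows.

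I do not expect a genuine obstacle. The only point needing care is the closed-ball convention, since the inclusion uses \(\Vlam(L)\le s\) and not a strict inequality. The inclusion holds with either convention, because \(M_L(s)>0\) means some nonzero lattice vector has norm at most \(\rho_n(s)\), and hence \(\lambda_1(L)\le\rho_n(s)\).
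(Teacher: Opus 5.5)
Your proposal is correct and is essentially the paper's proof: the paper also observes that \(\Vlam(L)>s\) forces \(M_L(s)=0\) and then applies the Chebyshev emptiness bound \eqref{eq:empty-cheb} at \(V=s\). Your unwinding of \(\Vlam(L)=\kappa_n\lambda_1(L)^n\) for the equivalent radius form is likewise exactly what the statement intends.
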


\begin{proof}
The event \(\Vlam(L)>s\) implies \(M_L(s)=0\).  Apply \eqref{eq:empty-cheb}.
\end{proof}

In particular, for every \(a_n\to\infty\),
\begin{equation}\label{eq:Vlambda-tight}
        \Vlam(L_n)\le a_n
        \qquad\hbox{with probability }1-O(a_n^{-1}).
\end{equation}
This does not assert that \(\lambda_1(L_n)\) is bounded away from zero or infinity in Euclidean length.  The Euclidean radius corresponding to fixed volume changes with \(n\).  By Stirling's formula,
\begin{align}\label{eq:kappa-stirling}
        \kappa_n
        &=\frac{\pi^{n/2}}{\Gamma(1+n/2)}
          =\frac{(2\pi e/n)^{n/2}}{\sqrt{\pi n}}
          \left(1+O\left(\frac1n\right)\right),\\
        \kappa_n^{-1/n}
        &=\sqrt{\frac{n}{2\pi e}}
          \left(1+O\left(\frac{\log n}{n}\right)\right).
\end{align}
Thus fixed volume corresponds to radius on the order of \(\sqrt{n/(2\pi e)}\), the usual random-lattice scale.

\subsection{Kissing number}

Since \(\tau(L)\le\gamma(L)\), Theorem \ref{thm:main-tail} immediately implies the same tail for the kissing number.

\begin{corollary}[Kissing-number tail]\label{cor:kissing-tail}
For every \(n\ge3\) and every \(T>0\),
\begin{equation}\label{eq:kissing-tail}
        \mu_n\{L:\tau(L)>T\}
        \le \min\{1,C_\gamma T^{-1}\}.
\end{equation}
Consequently, for every \(a_n\to\infty\),
\begin{equation}\label{eq:kissing-whp}
        \tau(L_n)\le a_n
        \qquad\hbox{with Haar--Siegel high probability.}
\end{equation}
\end{corollary}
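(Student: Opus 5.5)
The plan is to deduce Corollary \ref{cor:kissing-tail} directly from Theorem \ref{thm:main-tail} through the pointwise inequality \(\tau(L)\le\gamma(L)\). No new probabilistic input is needed.

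The first step is to verify this inequality for every \(L\in X_n\). Take \(r=1\) in the supremum form \eqref{eq:gamma-sup-intro}. This gives
\[
        \gamma(L)\ge \frac{N_L(\lambda_1(L))}{1^n}=\tau(L).
\]
Equivalently, take \(V=\Vlam(L)\) in Lemma \ref{lem:volume-gamma} and use \(M_L(\Vlam(L))=\tau(L)\) from \eqref{eq:M-zero-positive}. Since \(\lambda_1(L)\) is attained under the closed-ball convention, this is exact.

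The second step is the inclusion of events. For each \(T>0\),
\[
        \{L:\tau(L)>T\}\subseteq\{L:\gamma(L)>T\}.
\]
Monotonicity of \(\mu_n\) and \eqref{eq:main-tail} then give \(\mu_n\{\tau>T\}\le\min\{1,C_\gamma T^{-1}\}\) for all \(n\ge3\). Measurability of \(\tau\) is routine: \(N_L(R)\) is lower semicontinuous in \(L\) for closed balls, and \(\lambda_1\) is continuous on \(X_n\). If needed, I would simply note that \(\tau\) is Borel.

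The third step handles the high-probability statement. Put \(T=a_n\). Then \(\mu_n\{\tau(L_n)>a_n\}\le C_\gamma a_n^{-1}\to0\), which is \eqref{eq:kissing-whp}.

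There is no real obstacle here. The only point needing care is that the inequality \(\tau\le\gamma\) uses the same oriented-vector counting convention in both quantities, which holds by the definitions.
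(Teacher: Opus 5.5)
Your proof is correct and is the paper's argument: the $r=1$ term of the supremum gives $\tau(L)\le\gamma(L)$, so $\{\tau>T\}\subseteq\{\gamma>T\}$, and Theorem~\ref{thm:main-tail} with $T=a_n$ finishes. One small slip in your measurability aside: for a closed ball, $L\mapsto\#(L\Nonzero\cap B_R)$ is upper semicontinuous, not lower, as the paper notes; $\tau$ is still Borel, so the conclusion is unaffected.
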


The corollary is formally weaker than Theorem \ref{thm:main-tail}.  The quantity \(\tau(L)\) controls only the sphere of radius \(\lambda_1(L)\), whereas \(\gamma(L)\) controls the entire normalized family
\begin{equation}\label{eq:whole-family}
        \left\{
        \frac{N_L(r\lambda_1(L))}{r^n}:r\ge1
        \right\}.
\end{equation}
The proof of Theorem \ref{thm:main-tail} shows that, for random lattices, the extra uniformity over \(r\) costs only the summable grid factor
\begin{equation}\label{eq:grid-factor-cost}
        (1-\theta^{-1})^{-1}.
\end{equation}

\subsection{The \texorpdfstring{\(\beta\)}{beta} parameter}

The algorithmic parameter is \(\beta(L)=\gamma(L)^{1/n}\).  Theorem \ref{thm:main-tail} gives several useful forms of \(\beta(L)=1+o(1)\).

\begin{proposition}[High-probability \(\beta\) estimates]\label{prop:beta-estimates}
Let \(L_n\sim\mu_n\).  For every \(a_n\to\infty\),
\begin{equation}\label{eq:beta-an}
        \mu_n\left\{L_n:\beta(L_n)\le a_n^{1/n}\right\}
        \ge1-C_\gamma a_n^{-1}.
\end{equation}
In particular,
\begin{align}\label{eq:beta-logn}
        \beta(L_n)&\le (\log n)^{1/n}
        =1+\frac{\log\log n}{n}+O\!\left(\frac{(\log\log n)^2}{n^2}\right)
        &&\hbox{with probability }1-O((\log n)^{-1}),\\
\label{eq:beta-n2}
        \beta(L_n)&\le n^{2/n}
        =1+\frac{2\log n}{n}+O\!\left(\frac{(\log n)^2}{n^2}\right)
        &&\hbox{with probability }1-O(n^{-2}),\\
\label{eq:beta-sqrtn}
        \beta(L_n)&\le e^{1/\sqrt n}
        =1+\frac1{\sqrt n}+O\!\left(\frac1n\right)
        &&\hbox{with probability }1-O(e^{-\sqrt n}).
\end{align}
\end{proposition}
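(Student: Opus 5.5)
The plan is to deduce Proposition \ref{prop:beta-estimates} directly from the tail bound \eqref{eq:main-tail} of Theorem \ref{thm:main-tail}, using only the monotonicity of \(t\mapsto t^{1/n}\) on \((0,\infty)\) and elementary expansions of the exponential. For \eqref{eq:beta-an}: since \(\beta(L)=\gamma(L)^{1/n}\) and \(\gamma(L)\ge\tau(L)\ge2>0\), the events \(\{\beta(L_n)\le a_n^{1/n}\}\) and \(\{\gamma(L_n)\le a_n\}\) coincide. Hence
\[
\mu_n\{\beta(L_n)\le a_n^{1/n}\}=1-\mu_n\{\gamma(L_n)>a_n\}\ge 1-C_\gamma a_n^{-1}
\]
by \eqref{eq:main-tail} with \(T=a_n\). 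No assumption \(a_n\to\infty\) is needed for the inequality itself; it is only needed for the bound to be nontrivial.

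The three specializations take \(a_n=\log n\), \(a_n=n^2\), and \(a_n=e^{\sqrt n}\) respectively. This gives failure probabilities \(C_\gamma(\log n)^{-1}\), \(C_\gamma n^{-2}\), and \(C_\gamma e^{-\sqrt n}\), which are the stated \(O(\cdot)\) terms. For \(a_n=\log n\) one should restrict to \(n\ge 3\) so that \(\log n>1\); this is harmless for an \(O\)-statement.

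What remains is the asymptotic expansion of \(a_n^{1/n}=\exp(x_n)\) with \(x_n=(\log a_n)/n\). In the three cases \(x_n\) equals \((\log\log n)/n\), \((2\log n)/n\), and \(n^{-1/2}\), and each tends to \(0\). Taylor's theorem gives \(e^{x}=1+x+O(x^2)\) uniformly for \(0\le x\le1\), which yields
\[
1+\tfrac{\log\log n}{n}+O\!\bigl(\tfrac{(\log\log n)^2}{n^2}\bigr),\qquad
1+\tfrac{2\log n}{n}+O\!\bigl(\tfrac{(\log n)^2}{n^2}\bigr),\qquad
1+\tfrac1{\sqrt n}+O\!\bigl(\tfrac1n\bigr).
\]
The only step needing care is that the \(O\)-constants be uniform in \(n\). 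This holds because \(x_n\le1\) for all \(n\ge3\), with \(2\log n/n\le 2\log 3/3<1\) at the worst case, so the remainder bound \(e^x-1-x\le \tfrac{e}{2}x^2\) applies with an absolute constant. There is no genuine obstacle here: the proposition is a direct corollary of Theorem \ref{thm:main-tail}.
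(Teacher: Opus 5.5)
Your proposal is correct and follows the paper's proof: the identity \(\beta(L)^n=\gamma(L)\) turns the event into \(\{\gamma(L_n)\le a_n\}\), \eqref{eq:main-tail} with \(T=a_n\) gives the probability bound, and Taylor expansion of \(e^u\) at \(u=(\log a_n)/n\) gives the three displayed expansions. Your check that the \(O\)-constants are uniform for \(n\ge3\) (\(x_n\le1\) and \(e^x-1-x\le\tfrac{e}{2}x^2\)) is a small detail that the paper leaves implicit.
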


\begin{proof}
The first assertion follows from \(\beta(L)^n=\gamma(L)\) and \eqref{eq:main-tail}.  The expansions are Taylor expansions of \(e^u\) with
\[
        u=\frac{\log\log n}{n},
        \qquad
        u=\frac{2\log n}{n},
        \qquad
        u=\frac1{\sqrt n}.
\]
\end{proof}

A useful way to summarize Proposition \ref{prop:beta-estimates} is
\begin{equation}\label{eq:beta-op}
        \log\beta(L_n)=O_{\mu_n}\!\left(\frac{1}{n}\right)
        \quad\hbox{up to an arbitrarily slowly diverging numerator.}
\end{equation}
More explicitly, if \(a_n\to\infty\), then
\begin{equation}\label{eq:logbeta-prob}
        \Prob\left(n\log\beta(L_n)>\log a_n\right)
        \le C_\gamma a_n^{-1}.
\end{equation}
This is stronger than the subexponential condition
\begin{equation}\label{eq:subexp-condition}
        \log\beta(L_n)=o(1),
        \qquad
        n\log\beta(L_n)=o(n).
\end{equation}

\section{A sharper profile statement}

The proof of Theorem \ref{thm:main-tail} actually gives more than a bound for the single supremum \(\gamma(L)\).  It controls the whole normalized counting profile by the same pivot volume.  This section states that profile result explicitly.

For \(L\in X_n\), define the volume-normalized profile
\begin{equation}\label{eq:G-profile}
        G_L(V)=M_L(V)\frac{\Vlam(L)}{V},
        \qquad V\ge\Vlam(L).
\end{equation}
Then
\begin{equation}\label{eq:gamma-sup-G}
        \gamma(L)=\sup_{V\ge\Vlam(L)}G_L(V).
\end{equation}
For \(s>0\), let
\begin{equation}\label{eq:profile-good-set}
        \calG_n(s,\theta,\eta)=
        \left\{L\in X_n:
        \Vlam(L)\le s,
        \quad
        \sup_{V\ge s}\frac{M_L(V)}{V}\le\theta(1+\eta),
        \quad
        M_L(s)\le(1+\eta)s
        \right\}.
\end{equation}
The deterministic proof shows
\begin{equation}\label{eq:E-sub-G}
        \calE_n(s,\theta,\eta)\subseteq \calG_n(s,\theta,\eta)
\end{equation}
and, on \(\calG_n(s,\theta,\eta)\),
\begin{equation}\label{eq:G-profile-bound}
        G_L(V)\le
        \begin{cases}
        (1+\eta)s, & \Vlam(L)
        \le V\le s,\\[2mm]
        \theta(1+\eta)s, & V\ge s.
        \end{cases}
\end{equation}
Thus the random profile has a pivoted two-zone estimate:
\begin{equation}\label{eq:pivoted-two-zone}
        \sup_{\Vlam(L)\le V\le s}G_L(V)
        \le(1+\eta)s,
        \qquad
        \sup_{V\ge s}G_L(V)
        \le\theta(1+\eta)s.
\end{equation}
Combining \eqref{eq:E-sub-G} with Lemma \ref{lem:prob-pivot} yields the following.

\begin{proposition}[Uniform profile bound]\label{prop:profile}
For every \(n\ge3\), \(s>0\), \(\theta>1\), and \(\eta>0\), with \(\mu_n\)-probability at least
\begin{equation}\label{eq:profile-prob}
        1-\frac{C_R}{s}
        \left(1+\frac{1}{\eta^2(1-\theta^{-1})}\right),
\end{equation}
one has simultaneously
\begin{align}\label{eq:profile-simultaneous-1}
        \Vlam(L)&\le s,\\
\label{eq:profile-simultaneous-2}
        M_L(V)&\le\theta(1+\eta)V\qquad(V\ge s),\\
\label{eq:profile-simultaneous-3}
        \frac{N_L(r\lambda_1(L))}{r^n}&\le\theta(1+\eta)s\qquad(r\ge1).
\end{align}
\end{proposition}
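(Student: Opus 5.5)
The plan is to show that the good pivot event \(\calE_n(s,\theta,\eta)\) of Definition \ref{def:good-event} implies all three assertions \eqref{eq:profile-simultaneous-1}--\eqref{eq:profile-simultaneous-3} simultaneously. Lemma \ref{lem:prob-pivot} then bounds the probability of the complement by exactly the quantity subtracted in \eqref{eq:profile-prob}. So everything reduces to the deterministic inclusion \eqref{eq:E-sub-G}, plus the observation that membership in \(\calG_n(s,\theta,\eta)\) gives the three displayed inequalities.

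The first step handles \eqref{eq:profile-simultaneous-1}. If \(L\in\calE_n(s,\theta,\eta)\), then \(M_L(s)>0\), and the characterization \eqref{eq:Vlambda-characterization} gives \(\Vlam(L)\le s\). This is the same step as \eqref{eq:proof-Vlam-le-s}.

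The second step handles \eqref{eq:profile-simultaneous-2}. I will repeat the \(\theta\)-adic interpolation from the proof of Lemma \ref{lem:deterministic-pivot}. For \(V\ge s\), choose \(j\ge0\) with \(\theta^j s\le V<\theta^{j+1}s\). Monotonicity of \(M_L\) and the grid condition at index \(j+1\) then give \(M_L(V)\le(1+\eta)\theta^{j+1}s\le\theta(1+\eta)V\). The grid condition at \(j=0\) also gives \(M_L(s)\le(1+\eta)s\). Together these verify all three defining conditions of \(\calG_n(s,\theta,\eta)\), which establishes \eqref{eq:E-sub-G}.

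The third step handles \eqref{eq:profile-simultaneous-3}. By Lemma \ref{lem:volume-gamma}, for \(r\ge1\) and \(V=r^n\Vlam(L)\), we have \(N_L(r\lambda_1(L))/r^n=G_L(V)\). I split into the two zones of \eqref{eq:G-profile-bound}. For \(\Vlam(L)\le V\le s\), use \(G_L(V)\le M_L(s)\le(1+\eta)s\). For \(V\ge s\), use \(G_L(V)\le\theta(1+\eta)\Vlam(L)\le\theta(1+\eta)s\). Equivalently, \eqref{eq:profile-simultaneous-3} is just the conclusion \(\gamma(L)\le\theta(1+\eta)s\) of Lemma \ref{lem:deterministic-pivot}, since each term of the supremum \eqref{eq:gamma-sup-intro} is bounded by \(\gamma(L)\).

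Finally, all three inequalities hold on \(\calE_n(s,\theta,\eta)\), and \(\mu_n(\calE_n(s,\theta,\eta))\) is at least \eqref{eq:profile-prob} by Lemma \ref{lem:prob-pivot}. There is no real obstacle here. The only point needing care is the boundary case \(V=s\) and the choice of index \(j+1\) rather than \(j\), so that the upper endpoint of the grid interval dominates \(V\). This was already handled in \eqref{eq:j-choice}--\eqref{eq:proof-large-M-over-V}.
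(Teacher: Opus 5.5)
Your proposal is correct and follows essentially the same route as the paper. On the pivot event \(\calE_n(s,\theta,\eta)\), whose complement is bounded by Lemma \ref{lem:prob-pivot}, you get \(\Vlam(L)\le s\) from \(M_L(s)>0\), obtain the \(V\ge s\) bound by the same \(\theta\)-adic interpolation, and read off the radius bound from Lemma \ref{lem:deterministic-pivot} (your explicit two-zone check simply unpacks that lemma's proof).
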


\begin{proof}
The event \(\calE_n(s,\theta,\eta)\) has probability at least \eqref{eq:profile-prob}.  On this event, \eqref{eq:profile-simultaneous-1} follows from \(M_L(s)>0\).  For \(V\ge s\), choose \(j\ge0\) with \(\theta^j s\le V<\theta^{j+1}s\), and repeat \eqref{eq:proof-large-M-over-V} to get \eqref{eq:profile-simultaneous-2}.  Finally, \eqref{eq:profile-simultaneous-3} is Lemma \ref{lem:deterministic-pivot} in the equivalent radius form.
\end{proof}

The profile statement is sometimes more useful than the scalar tail.  For example, if one wants a bound only for radii \(r\) in a prescribed deterministic range \([1,R_*]\), the geometric grid may be truncated at a deterministic index.  On the event \(M_L(s)>0\), one has \(\Vlam(L)\le s\), so the relevant volumes satisfy
\[
        \Vlam(L)\le V\le R_*^n\Vlam(L)\le R_*^n s.
\]
Thus it is enough to check the grid up to, for instance,
\begin{equation}\label{eq:truncated-cost}
        J_*=\bigl\lceil\log_\theta R_*^n\bigr\rceil+1,
        \qquad
        \sum_{0\le j\le J_*}\frac{C_R}{\eta^2\theta^j s}
        \le \frac{C_R}{\eta^2s(1-\theta^{-1})}.
\end{equation}
The infinite supremum in \(\gamma(L)\) is therefore not a source of exponential loss.

\section{Why the argument is self-normalizing}

This section emphasizes a structural point that is easy to miss if one works only with radii.  Ordinary random-lattice counting theorems estimate
\begin{equation}\label{eq:ordinary-counting}
        M_L(V)=V+O_{\mu_n}(V^{1/2})
\end{equation}
for fixed or growing \(V\), while \(\gamma(L)\) asks for
\begin{equation}\label{eq:self-normalized-counting}
        M_L(r^n\Vlam(L))\le\Gamma r^n
        \qquad(r\ge1),
\end{equation}
where the base volume \(\Vlam(L)\) is itself random and correlated with the entire process \(M_L(\cdot)\).  The pivot method avoids any independence assumption.  It uses only the implications
\begin{align}\label{eq:self-implications}
        M_L(s)>0 &\Longrightarrow \Vlam(L)\le s,\\
        M_L(V)\le A V\ (V\ge s) &\Longrightarrow
        M_L(V)\frac{\Vlam(L)}{V}\le A s\ (V\ge s),\\
        M_L(V)\le M_L(s)\le A_0s\ (\Vlam(L)\le V\le s)&\Longrightarrow
        M_L(V)\frac{\Vlam(L)}{V}\le A_0s.
\end{align}
The random denominator \(\Vlam(L)\) is not estimated from below, and no conditioning on \(\lambda_1(L)\) is required.

One can phrase the proof as a maximal inequality for the process
\begin{equation}\label{eq:process-Z}
        Z_L(V)=\frac{M_L(V)}{V},
        \qquad V>0.
\end{equation}
Rogers's second moment gives, at a fixed \(V\),
\begin{equation}\label{eq:Z-tail}
        \Prob\{Z_L(V)>1+\eta\}\le \frac{C_R}{\eta^2V}.
\end{equation}
The process \(M_L(V)\) is monotone, hence the grid estimate
\begin{equation}\label{eq:max-Z-grid}
        \Prob\left\{\sup_{V\ge s}Z_L(V)>\theta(1+\eta)\right\}
        \le \sum_{j\ge0}\Prob\{Z_L(\theta^js)>1+\eta\}
        \le\frac{C_R}{\eta^2s(1-\theta^{-1})}.
\end{equation}
Together with
\begin{equation}\label{eq:no-point-tail}
        \Prob\{\Vlam(L)>s\}\le\Prob\{M_L(s)=0\}\le C_Rs^{-1},
\end{equation}
this gives a dimension-free estimate for the random product
\begin{equation}\label{eq:random-product}
        \sup_{V\ge s}Z_L(V)\Vlam(L).
\end{equation}
The term \(\sup_{\Vlam(L)\le V\le s}G_L(V)\) is controlled by the single grid point \(s\).  This is why the method gives a \(1/T\) tail for \(\gamma(L)\) without any regularity estimate for the lower tail of \(\Vlam(L)\).

\section{Comparison with universal spherical-code bounds}

For every lattice, the kissing number satisfies
\begin{equation}\label{eq:tau-KL}
        \tau(L)\le 2^{0.402n+o(n)}
\end{equation}
by the Kabatianskii--Levenshtein method \cite{kabatianskii1978}.  The parameter \(\gamma(L)\) has the same type of universal exponential upper bound as stated in \cite{aggarwal2025}:
\begin{equation}\label{eq:gamma-KL}
        \sup_{L\subset\R^n}\gamma(L)\le 2^{0.402n+o(n)}.
\end{equation}
Theorem \ref{thm:main-tail} is of a different nature.  It does not improve \eqref{eq:gamma-KL} for every lattice.  Instead, it says that the exceptional set on which \(\gamma(L)\) exceeds a threshold \(T\) has Haar--Siegel measure at most \(O(T^{-1})\), uniformly in \(n\).  Thus, for random lattices,
\begin{equation}\label{eq:random-vs-worst}
        \gamma(L_n)
        \begin{cases}
        \le n^2 & \hbox{with probability }1-O(n^{-2}),\\
        \le e^{\sqrt n} & \hbox{with probability }1-O(e^{-\sqrt n}),\\
        \le 2^{\eps n} & \hbox{with probability }1-O(2^{-\eps n})\quad(\eps>0),
        \end{cases}
\end{equation}
whereas the deterministic guarantee remains exponential.  In terms of \(\beta(L)=\gamma(L)^{1/n}\), this becomes
\begin{equation}\label{eq:random-vs-worst-beta}
        \beta(L_n)
        \begin{cases}
        \le 1+O(\log n/n) & \hbox{with probability }1-O(n^{-2}),\\
        \le 1+O(n^{-1/2}) & \hbox{with probability }1-O(e^{-\sqrt n}),\\
        \le 2^{\eps} & \hbox{with probability }1-O(2^{-\eps n}),
        \end{cases}
\end{equation}
compared with the worst-case asymptotic bound \(\sup_L\beta(L)\le2^{0.402+o(1)}\).

The estimates above also separate three notions that are sometimes conflated:
\begin{align}\label{eq:three-notions}
        \tau(L)&=N_L(\lambda_1(L)),\\
        \gamma(L)&=\sup_{r\ge1}N_L(r\lambda_1(L))r^{-n},\\
        \lim_{r\to\infty}N_L(r\lambda_1(L))r^{-n}
        &=\kappa_n\lambda_1(L)^n
        =\Vlam(L).
\end{align}
The asymptotic constant \(\Vlam(L)\) is random and usually of constant order in volume normalization.  The profile parameter \(\gamma(L)\) is a supremum over all intermediate scales.  The theorem says that this supremum remains tight even after normalizing by the random shortest-vector scale.

\section{A ``for most lattices'' theorem in several equivalent forms}

The statement \(\gamma(L)=2^{o(n)}\) can be encoded in several equivalent probabilistic ways.  We collect them to make explicit what Theorem \ref{thm:main-tail} proves.

\begin{theorem}[Equivalent high-probability formulations]\label{thm:equivalent}
Let \(L_n\sim\mu_n\).  The following consequences of Theorem \ref{thm:main-tail} hold.
\begin{enumerate}
\item For every \(a_n\to\infty\),
\begin{equation}\label{eq:eq-form-1}
        \gamma(L_n)\le a_n
        \qquad\hbox{with probability }1-O(a_n^{-1}).
\end{equation}
\item For every \(c_n\to\infty\) with \(c_n=o(n)\),
\begin{equation}\label{eq:eq-form-2}
        \log_2\gamma(L_n)
        \le c_n
        \qquad\hbox{with probability }1-O(2^{-c_n}).
\end{equation}
\item For every \(\delta>0\),
\begin{equation}\label{eq:eq-form-3}
        \log_2\beta(L_n)
        \le \delta
        \qquad\hbox{with probability }1-O(2^{-\delta n}).
\end{equation}
\item For every \(\delta_n>0\) with \(\delta_n n\to\infty\),
\begin{equation}\label{eq:eq-form-4}
        \beta(L_n)
        \le 2^{\delta_n}
        \qquad\hbox{with probability }1-O(2^{-\delta_n n}).
\end{equation}
\end{enumerate}
\end{theorem}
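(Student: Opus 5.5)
All four items follow by substituting a particular threshold \(T\) into the tail bound \eqref{eq:main-tail} of Theorem \ref{thm:main-tail}, namely \(\mu_n\{\gamma(L)>T\}\le C_\gamma T^{-1}\) uniformly in \(n\ge3\). The only other tools are the identities \(\beta(L)=\gamma(L)^{1/n}\) and \(\log_2\beta(L)=\frac1n\log_2\gamma(L)\), which turn each event into an event of the form \(\{\gamma(L_n)>T_n\}\). The hypotheses \(a_n\to\infty\), \(c_n\to\infty\), \(\delta_n n\to\infty\) play no role in the inequalities themselves. They only guarantee that the error terms \(O(\cdot)\) tend to zero, so that each statement is a genuine high-probability statement.

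\textbf{Item 1.} Take \(T=a_n\) in \eqref{eq:main-tail}. This gives \(\mu_n\{\gamma(L_n)>a_n\}\le C_\gamma a_n^{-1}\), which is \eqref{eq:eq-form-1} with implied constant \(C_\gamma\). This is also \eqref{eq:any-an} of Corollary \ref{cor:whp}.

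\textbf{Item 2.} The event \(\{\log_2\gamma(L_n)>c_n\}\) equals \(\{\gamma(L_n)>2^{c_n}\}\). Apply \eqref{eq:main-tail} with \(T=2^{c_n}\) to get the bound \(C_\gamma 2^{-c_n}\). The condition \(c_n=o(n)\) is not used for the probability bound. It only records that the resulting bound \(\gamma(L_n)\le2^{c_n}\) is subexponential, i.e. \(\gamma(L_n)=2^{o(n)}\).

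\textbf{Items 3 and 4.} Since \(\log_2\beta(L_n)=\frac1n\log_2\gamma(L_n)\), the event \(\{\log_2\beta(L_n)>\delta\}\) coincides with \(\{\gamma(L_n)>2^{\delta n}\}\). Taking \(T=2^{\delta n}\) gives the bound \(C_\gamma2^{-\delta n}\), which is item 3. Item 4 is the same computation with \(\delta\) replaced by \(\delta_n\) and \(T=2^{\delta_n n}\). Here \(\delta_n n\to\infty\) ensures \(2^{-\delta_n n}\to0\). This matches \eqref{eq:subexp-whp}.

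\textbf{Care points.} There is no real obstacle here; only bookkeeping is needed. The strict versus non-strict inequalities are handled by passing to complements: the bound on \(\{\gamma>T\}\) yields \(\{\gamma\le T\}\) with probability at least \(1-C_\gamma T^{-1}\). Because \(C_\gamma\) is absolute and \eqref{eq:main-tail} holds for every \(n\ge3\) and every \(T>0\), the implied constants in all four \(O(\cdot)\) terms can be taken to be \(C_\gamma\), uniformly in \(n\).
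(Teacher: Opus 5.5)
Your proposal is correct and is the paper's own proof: each item comes from substituting \(T=a_n\), \(T=2^{c_n}\), \(T=2^{\delta n}\), or \(T=2^{\delta_n n}\) into \eqref{eq:main-tail}, with \(\log_2\beta(L)=\frac1n\log_2\gamma(L)\) rewriting items 3 and 4 as events \(\{\gamma(L_n)>T\}\). You are also right that \(c_n=o(n)\) plays no role in the probability bound; the paper mentions it only to record that \(2^{c_n}=2^{o(n)}\).
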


\begin{proof}
For (1), apply \eqref{eq:main-tail} with \(T=a_n\).  For (2), apply \eqref{eq:main-tail} with \(T=2^{c_n}\).  Since \(c_n=o(n)\), \(2^{c_n}=2^{o(n)}\).  For (3), the event \(\log_2\beta(L_n)>\delta\) is the same as
\[
        \gamma(L_n)>2^{\delta n}.
\]
For (4), replace \(\delta\) by \(\delta_n\).
\end{proof}

The product almost-sure versions are equally direct.

\begin{theorem}[Product almost-sure forms]\label{thm:product-forms}
Let \((L_n)_{n\ge3}\) be independent with \(L_n\sim\mu_n\).  If \((a_n)\) is any positive sequence satisfying
\begin{equation}\label{eq:summable-a}
        \sum_{n\ge3}a_n^{-1}<\infty,
\end{equation}
then
\begin{equation}\label{eq:product-gamma}
        \gamma(L_n)
        \le a_n
        \qquad\hbox{for all sufficiently large }n
\end{equation}
almost surely.  In particular, for every \(\alpha>1\),
\begin{equation}\label{eq:product-polynomial}
        \gamma(L_n)\le n^\alpha
        \qquad\hbox{eventually almost surely},
\end{equation}
and
\begin{equation}\label{eq:product-root-exp}
        \gamma(L_n)\le\exp(\sqrt n)
        \qquad\hbox{eventually almost surely}.
\end{equation}
\end{theorem}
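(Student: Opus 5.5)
The plan is a Borel--Cantelli argument on the independent product space, with Theorem \ref{thm:main-tail} supplying the probabilities. For each \(n\ge3\), let
\[
        F_n=\{\gamma(L_n)>a_n\}.
\]
This is an event depending only on the \(n\)-th coordinate of \(\prod_{n\ge3}(X_n,\mu_n)\). First I check that \(\gamma\) is measurable on \(X_n\). Write \(\gamma(L)=\sup_{r\ge1}N_L(r\lambda_1(L))r^{-n}\). The map \(r\mapsto N_L(r\lambda_1(L))\) is nondecreasing and right-continuous, because we use closed balls. Hence the supremum can be taken over rational \(r\ge1\). Each \(L\mapsto N_L(R)\) is measurable, being a Siegel transform of \(\1_{B_R}\), and \(L\mapsto\lambda_1(L)\) is measurable. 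This is a routine step.

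By \eqref{eq:main-tail} with \(T=a_n\), the marginal of \(L_n\) being \(\mu_n\) gives
\[
        \Prob(F_n)=\mu_n\{\gamma>a_n\}\le C_\gamma a_n^{-1}.
\]
Summing and using \eqref{eq:summable-a} gives \(\sum_n\Prob(F_n)<\infty\). The first Borel--Cantelli lemma then says that almost surely only finitely many \(F_n\) occur. This requires no independence. So there is a random \(n_0\) with \(\gamma(L_n)\le a_n\) for all \(n\ge n_0\), which is \eqref{eq:product-gamma}.

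For the two special cases I only need to verify summability. For \eqref{eq:product-polynomial}, take \(a_n=n^\alpha\) with \(\alpha>1\); then \(\sum n^{-\alpha}<\infty\) is the \(p\)-series. For \eqref{eq:product-root-exp}, take \(a_n=\exp(\sqrt n)\). Since \(e^{-\sqrt x}\) is decreasing, the integral test applies, and the substitution \(x=u^2\) gives
\[
        \sum_{n\ge3}e^{-\sqrt n}\le\int_0^\infty e^{-\sqrt x}\,dx=2,
\]
exactly as in the proof of Corollary \ref{cor:whp}.

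There is no genuine obstacle here. The only points needing a word are the measurability of \(\gamma\) and the observation that each \(F_n\) is a cylinder event on the product space, so its probability equals its \(\mu_n\)-measure.
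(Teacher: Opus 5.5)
Your proposal is correct and is essentially the paper's proof: apply Theorem \ref{thm:main-tail} with \(T=a_n\) to get \(\sum_n \Prob\{\gamma(L_n)>a_n\}\le C_\gamma\sum_n a_n^{-1}<\infty\), invoke the first Borel--Cantelli lemma, and check summability for \(a_n=n^\alpha\) and \(a_n=e^{\sqrt n}\). Your two additional remarks are also correct. Measurability of \(\gamma\) follows from reducing the supremum to rational \(r\); the paper does this separately in its measurability section, via rational volumes. Independence is not actually needed for the first Borel--Cantelli lemma; the paper leaves this implicit.
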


\begin{proof}
By Theorem \ref{thm:main-tail},
\begin{equation}\label{eq:sum-prob}
        \sum_{n\ge3}\Prob\{\gamma(L_n)>a_n\}
        \le C_\gamma\sum_{n\ge3}a_n^{-1}<\infty.
\end{equation}
The Borel--Cantelli lemma proves \eqref{eq:product-gamma}.  The choices \(a_n=n^\alpha\) with \(\alpha>1\) and \(a_n=\exp(\sqrt n)\) satisfy \eqref{eq:summable-a}.
\end{proof}

Taking \(a_n=n^{1+\eps}\) in Theorem \ref{thm:product-forms} gives the almost-sure polynomial estimate
\begin{equation}\label{eq:as-poly-eps}
        \log_2\gamma(L_n)
        \le(1+\eps)\log_2 n
        \qquad(n\gg1)
\end{equation}
for every fixed \(\eps>0\).  Taking \(a_n=e^{\sqrt n}\) gives the almost-sure subexponential estimate with a summable failure probability.  These are stronger than needed for the SICOMP conjectural use, but they indicate the stability of the phenomenon.

\section{Boundary cases and robustness}

The proof is robust under small changes in normalization.  We record a few variants because they are useful when comparing with different conventions in the lattice algorithms literature.

\subsection{Covolume other than one}

Let \(L\subset\R^n\) have determinant \(\Delta>0\).  The unimodular rescaling is
\begin{equation}\label{eq:unimod-rescale}
        L^{\#}=\Delta^{-1/n}L,
        \qquad
        \det(L^{\#})=1.
\end{equation}
Because \(\gamma\) is scale-invariant,
\begin{equation}\label{eq:gamma-rescale}
        \gamma(L)=\gamma(L^{\#}).
\end{equation}
Thus Theorem \ref{thm:main-tail} applies to any random model obtained by first sampling \(L^{\#}\sim\mu_n\) and then multiplying by an arbitrary positive scalar, deterministic or independent of \(L^{\#}\).  In such a model,
\begin{equation}\label{eq:scaled-model-tail}
        \Prob\{\gamma(L)>T\}
        =\mu_n\{\gamma(L^{\#})>T\}
        \le C_\gamma T^{-1}.
\end{equation}
The determinant therefore plays no role in the normalized kissing profile.

\subsection{Unoriented vector pairs}

Some authors count pairs \(\{x,-x\}\) rather than oriented vectors.  Define
\begin{equation}\label{eq:unoriented-count}
        N_L^{\pm}(R)=\frac12 N_L(R),
        \qquad
        \gamma^{\pm}(L)=\sup_{r\ge1}\frac{N_L^{\pm}(r\lambda_1(L))}{r^n}.
\end{equation}
Then
\begin{equation}\label{eq:unoriented-gamma}
        \gamma^{\pm}(L)=\frac12\gamma(L),
        \qquad
        \mu_n\{\gamma^{\pm}(L)>T\}
        \le \frac{C_\gamma}{2T}.
\end{equation}
All subexponential conclusions are unchanged.

\subsection{Closed and open balls}

The definitions above use closed Euclidean balls.  Replacing \(B_R\) by open balls changes \(M_L(V)\) only at the discrete set of volumes
\begin{equation}\label{eq:jump-volumes}
        \{\kappa_n\norm{x}^n:x\in L\Nonzero\}.
\end{equation}
The supremum in \eqref{eq:gamma-volume} is unaffected because
\begin{equation}\label{eq:right-continuity}
        M_L^{\rm closed}(V)=\lim_{\eps\downarrow0}M_L^{\rm open}(V+\eps),
        \qquad
        M_L^{\rm open}(V)=\lim_{\eps\downarrow0}M_L^{\rm closed}(V-\eps)
\end{equation}
at every \(V>0\), with the second identity interpreted through volumes below \(V\).  The grid argument may also be run with \((1+\eps)\theta^js\) and \(\eps\downarrow0\).  Thus no boundary regularity of \(\mu_n\) is required.

\subsection{Non-spherical convex bodies}

The radius-normalized definition \eqref{eq:gamma-def-intro} is Euclidean and hence spherical.  However, the pivot method applies verbatim to any increasing one-parameter family \((K_V)_{V>0}\) of bounded centrally symmetric Borel sets satisfying
\begin{equation}\label{eq:KV-volume}
        K_U\subseteq K_V\quad(U\le V),
        \qquad
        \vol(K_V)=V,
\end{equation}
provided Rogers's variance estimate is applied to \(K_V\).  If
\begin{equation}\label{eq:M-K}
        M_{L,K}(V)=\#(L\Nonzero\cap K_V),
        \qquad
        V_{L,K}=\inf\{V:M_{L,K}(V)>0\},
\end{equation}
and
\begin{equation}\label{eq:gamma-K}
        \gamma_K(L)=\sup_{V\ge V_{L,K}}M_{L,K}(V)\frac{V_{L,K}}{V},
\end{equation}
then the same proof gives
\begin{equation}\label{eq:gamma-K-tail}
        \mu_n\{L:\gamma_K(L)>T\}
        \le C_\gamma T^{-1},
\end{equation}
with the same absolute constant as in \eqref{eq:main-tail}.  The Euclidean result is the special case \(K_V=\Ball(V)\).  This observation is not needed for the SVP application, but it shows that the proof is a monotone-volume argument rather than a spherical-code argument.

\section{The Rogers input in the indicator case}

This paper uses Theorem \ref{thm:rogers} as a standard theorem.  We record the precise indicator-function consequence used in the previous sections and explain the role of the dimension restriction without attempting to rederive Rogers's full formula.

Let \(A\subset\R^n\) be bounded and Borel, and define
\begin{equation}\label{eq:XA}
        X_A(L)=\#(L\Nonzero\cap A).
\end{equation}
By Siegel's formula,
\begin{equation}\label{eq:XA-mean}
        \E X_A=\vol(A).
\end{equation}
The Rogers--Schmidt estimate \eqref{eq:rogers-variance} gives directly
\begin{equation}\label{eq:variance-from-rho}
        \operatorname{Var}(X_A)
        =\E\bigl(X_A-\vol(A)\bigr)^2
        \le C_R\vol(A),
\end{equation}
uniformly for \(n\ge3\).  In Rogers's second-moment formula the remainder beyond \(\vol(A)^2\) is a sum of rank-one rational-dependency terms.  A typical scalar-dependency integral has the form
\begin{equation}\label{eq:typical-rank-one}
        \frac{1}{q^n}\int_{\R^n}\1_A(x)\1_A\!\left(\frac{p}{q}x\right)\,dx,
        \qquad p,q\in\Z,
        \quad q\ge1,
        \quad \gcd(p,q)=1.
\end{equation}
For \(p\ne0\), the elementary bound
\begin{equation}\label{eq:rank-one-bound}
        \int\1_A(x)\1_A\!\left(\frac{p}{q}x\right)\,dx
        =\vol\bigl(A\cap(q/p)A\bigr)
        \le \min\{\vol(A),(q/|p|)^n\vol(A)\}
\end{equation}
gives
\begin{equation}\label{eq:rank-one-bound-2}
        \frac{1}{q^n}\int\1_A(x)\1_A\!\left(\frac{p}{q}x\right)\,dx
        \le \vol(A)\min\{q^{-n},|p|^{-n}\}.
\end{equation}
The model sum
\begin{equation}\label{eq:model-rational-total}
        \sum_{q\ge1}\sum_{p\in\Z\Nonzero}\min\{q^{-n},|p|^{-n}\}
\end{equation}
is finite uniformly for \(n\ge3\), since it is bounded by the same sum with \(n=3\).  This summability is the elementary analytic reason that a dimension-uniform variance bound of the form \eqref{eq:rogers-variance} starts at dimension three.  The exact coefficients and exclusions in Rogers's formula are part of the cited theorem; the proof of the present paper uses only the final bound \eqref{eq:variance-from-rho}.

For balls \(A=\Ball(V)\), the estimates reduce to
\begin{equation}\label{eq:ball-Rogers-final}
        \E M_L(V)=V,
        \qquad
        \operatorname{Var}(M_L(V))\le C_RV.
\end{equation}
These two lines are the only analytic facts used to prove the main theorem.

\section{No hidden dependence on the dimension}

Because the conjecture concerns \(n\to\infty\), it is important that the constants in Theorem \ref{thm:main-tail} not deteriorate with \(n\).  We isolate where dimension enters and where it cancels.

The ball radius corresponding to volume \(V\) is
\begin{equation}\label{eq:radius-volume-dim}
        \rho_n(V)=\left(\frac{V}{\kappa_n}\right)^{1/n}.
\end{equation}
Both \(\lambda_1(L)\) and \(\rho_n(V)\) have dimension-dependent Euclidean size.  However, the normalized ratio in \(\gamma\) is purely volumetric:
\begin{equation}\label{eq:dimension-cancel}
        \left(\frac{\rho_n(V)}{\lambda_1(L)}\right)^n
        =\frac{\kappa_n\rho_n(V)^n}{\kappa_n\lambda_1(L)^n}
        =\frac{V}{\Vlam(L)}.
\end{equation}
The factor \(\kappa_n\) cancels exactly.  Thus the proof never estimates \(\kappa_n\), except when translating back to Euclidean radii in \eqref{eq:kappa-stirling}.

The only constants in the proof are:
\begin{align}\label{eq:constant-list}
        C_R&\quad\hbox{from Rogers--Schmidt variance},\\
        \eta^{-2}&\quad\hbox{from Chebyshev},\\
        (1-\theta^{-1})^{-1}&\quad\hbox{from the geometric grid},\\
        \theta(1+\eta)&\quad\hbox{from monotone interpolation between grid points}.
\end{align}
None depends on \(n\).  Therefore the tail constant
\begin{equation}\label{eq:C-theta-eta}
        C(\theta,\eta)=
        C_R\theta(1+\eta)
        \left(1+\frac{1}{\eta^2(1-\theta^{-1})}\right)
\end{equation}
in the equivalent bound
\begin{equation}\label{eq:T-tail-theta-eta}
        \mu_n\{\gamma(L)>T\}
        \le\frac{C(\theta,\eta)}{T}
\end{equation}
is dimension-free.  Optimizing \(C(\theta,\eta)\) is irrelevant for subexponential applications, but the formula shows that any fixed \(\theta>1\), \(\eta>0\) gives the same \(T^{-1}\) order.

\section{A lower-bound sanity check}

The theorem is an upper tail.  It is natural to ask whether the order \(T^{-1}\) is an artifact of Chebyshev and the union bound.  We do not need an optimal lower tail, but two elementary observations show that no exponentially small upper tail can hold uniformly for all \(T\) by this method.

First, \(\gamma(L)\ge\tau(L)\ge2\) under the oriented-vector convention.  Hence
\begin{equation}\label{eq:gamma-min}
        \mu_n\{\gamma(L)>T\}=1
        \qquad(0<T<2).
\end{equation}
Second, the event \(\Vlam(L)>s\) has probability at most \(C_Rs^{-1}\) by \eqref{eq:Vlambda-tail}, and for a Poisson point process of intensity one the analogous void probability is exactly \(e^{-s}\).  Thus the \(s^{-1}\) estimate is a variance-level bound, not a sharp distributional theorem for \(\Vlam\).  The strength of Theorem \ref{thm:main-tail} lies elsewhere: it converts only second-moment information into a uniform all-radii statement about the self-normalized profile.  Any improvement from \(T^{-1}\) to a sharper tail would require distributional input beyond \eqref{eq:rogers-variance}; it would not change the conclusion
\begin{equation}\label{eq:tail-enough}
        \gamma(L_n)=2^{o(n)}
\end{equation}
with high probability.

\section{Measurability and countable reductions}

The preceding proof treats \(\gamma(L)\) as an ordinary random variable.  For completeness we record a countable formulation, avoiding any topological subtlety about the uncountable supremum over \(r\ge1\).  Let
\begin{equation}\label{eq:rational-volumes}
        \mathbb Q_{>0}=
        \{q\in\mathbb Q:q>0\}.
\end{equation}
For fixed \(V>0\), the map \(L\mapsto M_L(V)\) is Borel on \(X_n\).  Indeed, for any compact ball \(B\), the function
\begin{equation}\label{eq:count-semicont}
        L\longmapsto \#(L\cap B)
\end{equation}
is upper semicontinuous, and in particular Borel.  Equivalently, one may approximate indicators by continuous compactly supported functions
\begin{equation}\label{eq:continuous-approx}
        0\le f_{V,k}^{-}\le\1_{\Ball(V)}\le f_{V,k}^{+}\le1,
        \qquad
        f_{V,k}^{-}\uparrow\1_{\operatorname{int}\Ball(V)},
        \qquad
        f_{V,k}^{+}\downarrow\1_{\Ball(V)},
\end{equation}
and use the continuity of \(L\mapsto\widehat f(L)\) for compactly supported continuous \(f\).  It follows that
\begin{equation}\label{eq:Vlam-borel}
        \Vlam(L)=\inf\{V\in\mathbb Q_{>0}:M_L(V)>0\}
\end{equation}
is Borel.

The same monotonicity gives a countable expression for \(\gamma\).  Define
\begin{equation}\label{eq:gamma-Q}
        \gamma_{\mathbb Q}(L)=
        \sup_{V\in\mathbb Q_{>0}:\,V\ge\Vlam(L)}
        M_L(V)\frac{\Vlam(L)}{V}.
\end{equation}
Then
\begin{equation}\label{eq:gamma-equals-Q}
        \gamma_{\mathbb Q}(L)=\gamma(L).
\end{equation}
To see this, fix \(V\ge\Vlam(L)\).  Choose rational \(V_m\downarrow V\).  By monotonicity,
\begin{equation}\label{eq:rational-down}
        M_L(V)\frac{\Vlam(L)}{V}
        \le
        M_L(V_m)\frac{\Vlam(L)}{V}
        =
        M_L(V_m)\frac{\Vlam(L)}{V_m}\frac{V_m}{V}.
\end{equation}
Taking \(m\to\infty\) and using \(V_m/V\to1\) gives
\begin{equation}\label{eq:sup-rational-dominate}
        M_L(V)\frac{\Vlam(L)}{V}
        \le\gamma_{\mathbb Q}(L).
\end{equation}
The reverse inequality is immediate because \(\mathbb Q_{>0}\subset(0,\infty)\).  Hence \(\gamma\) is Borel as a countable supremum of Borel functions.

This countable reduction also gives an alternative proof of the tail event inclusion used above.  For \(T=\theta(1+\eta)s\),
\begin{equation}\label{eq:tail-countable}
        \{\gamma>T\}
        =\bigcup_{q\in\mathbb Q_{>0}}
        \left\{q\ge\Vlam(L),\,M_L(q)\frac{\Vlam(L)}{q}>T\right\}.
\end{equation}
On \(\calE_n(s,\theta,\eta)\) every member of this countable union is empty by Lemma \ref{lem:deterministic-pivot}.  Thus
\begin{equation}\label{eq:tail-countable-inclusion}
        \{\gamma>T\}\subseteq X_n\setminus\calE_n(s,\theta,\eta)
\end{equation}
without invoking an uncountable exceptional union.  This is occasionally useful if one wants to pass from indicators of balls to limiting families of sets.

\section{Finite-scale and localized versions}

The global parameter \(\gamma(L)\) takes a supremum over all \(r\ge1\).  Some algorithmic reductions only use a bounded range of radii.  The same proof gives a localized estimate with an explicit finite-grid cost.  For \(R\ge1\), define
\begin{equation}\label{eq:gamma-R}
        \gamma_{\le R}(L)=
        \sup_{1\le r\le R}\frac{N_L(r\lambda_1(L))}{r^n}.
\end{equation}
Clearly
\begin{equation}\label{eq:gamma-R-monotone}
        \tau(L)
        \le\gamma_{\le R}(L)
        \le\gamma_{\le R'}(L)
        \le\gamma(L)
        \qquad(1\le R\le R').
\end{equation}
The localized analogue of Definition \ref{def:good-event} is obtained by stopping the grid once the volume has passed \(R^n s\).  Put
\begin{equation}\label{eq:J-def}
        J_R=J_R(\theta)=\max\{0,\lceil\log_\theta R^n\rceil\}.
\end{equation}
Define
\begin{equation}\label{eq:localized-event}
        \calE_n^{\le R}(s,\theta,\eta)=
        \left\{M_L(s)>0,
        \quad
        M_L(\theta^j s)\le(1+\eta)\theta^j s
        \quad(0\le j\le J_R+1)
        \right\}.
\end{equation}
If \(L\in\calE_n^{\le R}(s,\theta,\eta)\), the proof of Lemma \ref{lem:deterministic-pivot} applies to every
\begin{equation}\label{eq:localized-V-range}
        \Vlam(L)
        \le V
        \le R^n\Vlam(L)
        \le R^n s.
\end{equation}
Hence
\begin{equation}\label{eq:localized-deterministic}
        \gamma_{\le R}(L)
        \le\theta(1+\eta)s.
\end{equation}
The complement probability is now
\begin{align}\label{eq:localized-prob}
        \mu_n(X_n\setminus\calE_n^{\le R}(s,\theta,
        \eta))
        &\le\frac{C_R}{s}+
        \sum_{j=0}^{J_R+1}\frac{C_R}{\eta^2\theta^j s}\\
        &\le\frac{C_R}{s}
        \left(1+\frac{1-\theta^{-(J_R+2)}}{\eta^2(1-\theta^{-1})}\right).
\end{align}
This proves the following finite-scale theorem.

\begin{theorem}[Localized tail]\label{thm:localized-tail}
For every \(n\ge3\), \(R\ge1\), \(s>0\), \(\theta>1\), and \(\eta>0\),
\begin{equation}\label{eq:localized-tail}
        \mu_n\{L:\gamma_{\le R}(L)>\theta(1+\eta)s\}
        \le
        \frac{C_R}{s}
        \left(1+\frac{1-\theta^{-(J_R+2)}}{\eta^2(1-\theta^{-1})}\right),
\end{equation}
where \(J_R=\max\{0,\lceil\log_\theta R^n\rceil\}\).
In particular,
\begin{equation}\label{eq:localized-tail-simplified}
        \mu_n\{L:\gamma_{\le R}(L)>\theta(1+\eta)s\}
        \le
        \frac{C_R}{s}
        \left(1+\frac{1}{\eta^2(1-\theta^{-1})}\right),
\end{equation}
which is the same bound as for the global parameter.
\end{theorem}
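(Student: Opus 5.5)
The proof will follow the two-step pattern of Theorem \ref{thm:main-tail}: first a deterministic truncated version of Lemma \ref{lem:deterministic-pivot}, then a finite union bound. The first step is to show that \(L\in\calE_n^{\le R}(s,\theta,\eta)\) implies \(\gamma_{\le R}(L)\le\theta(1+\eta)s\). By Lemma \ref{lem:volume-gamma} applied to the range \(1\le r\le R\), we have
\[
\gamma_{\le R}(L)=\sup_{\Vlam(L)\le V\le R^n\Vlam(L)}M_L(V)\frac{\Vlam(L)}{V}.
\]
The condition \(M_L(s)>0\) gives \(\Vlam(L)\le s\), so every relevant \(V\) lies in \([\Vlam(L),R^ns]\).

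For \(V\le s\), the \(j=0\) grid condition gives the bound \((1+\eta)s\), exactly as in \eqref{eq:proof-small-range-2}. For \(s<V\le R^ns\), choose \(j\ge0\) with \(\theta^js\le V<\theta^{j+1}s\). The main point to check is that \(j+1\le J_R+1\), so that the grid condition needed in \eqref{eq:proof-large-M} is among the retained ones. Since \(\theta^j s\le V\le R^ns\), we get \(\theta^j\le R^n\), hence \(j\le\log_\theta R^n\le\lceil\log_\theta R^n\rceil\le J_R\). For \(R=1\) we have \(V\le s\), so this case is vacuous. Then \eqref{eq:proof-large-M}--\eqref{eq:proof-large-final} go through verbatim and give \(\theta(1+\eta)s\). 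This index bookkeeping is the only step needing care; everything else is copied from the global proof.

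The second step is the probability estimate. The complement of \(\calE_n^{\le R}\) is contained in \(\{M_L(s)=0\}\cup\bigcup_{j=0}^{J_R+1}\{M_L(\theta^js)>(1+\eta)\theta^js\}\). Applying \eqref{eq:empty-cheb} and \eqref{eq:upper-cheb} bounds its measure by
\[
\frac{C_R}{s}\left(1+\frac{1}{\eta^2}\sum_{j=0}^{J_R+1}\theta^{-j}\right).
\]
Summing the finite geometric series, \(\sum_{j=0}^{J_R+1}\theta^{-j}=(1-\theta^{-(J_R+2)})/(1-\theta^{-1})\), which gives \eqref{eq:localized-tail}. Since \(1-\theta^{-(J_R+2)}\le1\), the simplified bound \eqref{eq:localized-tail-simplified} follows at once. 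It can also be read off from \(\gamma_{\le R}\le\gamma\) together with Theorem \ref{thm:main-tail}.

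Measurability of \(\gamma_{\le R}\) follows as in the countable reduction \eqref{eq:gamma-Q}, with the supremum restricted to rational \(V\le R^n\Vlam(L)\). At the top endpoint, right-approximation by rationals may exceed \(R^n\Vlam(L)\). This is harmless because the deterministic bound holds on the slightly larger range \(V\le R^n s\cdot\theta\), as the grid runs to index \(J_R+1\). Alternatively, we can simply use the event inclusion directly, as in \eqref{eq:tail-countable-inclusion}.
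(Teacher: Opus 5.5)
Your proposal is correct and follows the paper's proof essentially verbatim. Like the paper, you use the truncated grid event $\calE_n^{\le R}(s,\theta,\eta)$, run the deterministic pivot argument on $\Vlam(L)\le V\le R^n\Vlam(L)\le R^n s$, and finish with the finite geometric union bound. Your explicit check that $j+1\le J_R+1$ fills in bookkeeping the paper leaves implicit. The measurability aside is unnecessary, and your enlarged-range remark actually justifies the event inclusion rather than the measurability of $\gamma_{\le R}$, but it does no harm.
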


The finite version shows that the global supremum costs no more than a finite algorithmic window: the geometric tail
\begin{equation}\label{eq:tail-grid-no-cost}
        \sum_{j>J_R}\frac1{\theta^js}
        =\frac{\theta^{-(J_R+1)}}{s(1-\theta^{-1})}
        \le\frac{1}{sR^n(1-\theta^{-1})}
\end{equation}
is already negligible once the grid reaches volume \(R^ns\).  Conversely, even if \(R=\infty\), the remaining cost is finite.

A shell version is also immediate.  For \(1\le R_1\le R_2\), define
\begin{equation}\label{eq:gamma-shell}
        \gamma_{[R_1,R_2]}(L)=
        \sup_{R_1\le r\le R_2}
        \frac{N_L(r\lambda_1(L))}{r^n}.
\end{equation}
Since \(\gamma_{[R_1,R_2]}\le\gamma_{\le R_2}\), Theorem \ref{thm:localized-tail} applies directly.  A slightly more refined deterministic statement is available on the corresponding localized grid event.  For the shell volumes \(V\in[R_1^n\Vlam(L),R_2^n\Vlam(L)]\), the part \(V\ge s\) satisfies
\[
        M_L(V)\frac{\Vlam(L)}{V}\le \theta(1+\eta)\Vlam(L),
\]
while the part \(V<s\) satisfies
\[
        M_L(V)\frac{\Vlam(L)}{V}
        \le M_L(s)\frac{\Vlam(L)}{R_1^n\Vlam(L)}
        \le (1+\eta)R_1^{-n}s.
\]
Consequently,
\begin{equation}\label{eq:shell-refinement}
        \gamma_{[R_1,R_2]}(L)
        \le (1+\eta)\max\{R_1^{-n}s,\theta\Vlam(L)\}
\end{equation}
on the localized grid event.  In particular, if \(R_1^n\Vlam(L)\ge s\), then the small-volume zone disappears and
\begin{equation}\label{eq:shell-refinement-conditioned}
        \gamma_{[R_1,R_2]}(L)
        \le \theta(1+\eta)\Vlam(L).
\end{equation}
Without additional information on \(\Vlam(L)\), the unconditional tail remains the same \(s^{-1}\) bound.

\section{Choosing the grid parameters}

The parameterized estimate \eqref{eq:main-tail-parametric} can be rewritten as a one-parameter family of \(T^{-1}\) tails.  Put
\begin{equation}\label{eq:T-theta-eta-s}
        T=\theta(1+\eta)s.
\end{equation}
Then \eqref{eq:main-tail-parametric} gives
\begin{equation}\label{eq:tail-constant-param}
        \mu_n\{\gamma(L)>T\}
        \le \frac{C_R K(\theta,\eta)}{T},
        \qquad
        K(\theta,\eta)=
        \theta(1+\eta)
        \left(1+\frac{1}{\eta^2(1-\theta^{-1})}\right).
\end{equation}
The simple choice \((\theta,\eta)=(2,1)\) gives \(K(2,1)=12\).  One can lower this numerical constant by balancing the interpolation loss \(\theta(1+\eta)\) against the Chebyshev-grid loss \(\eta^{-2}(1-\theta^{-1})^{-1}\).  Although the exact optimum is unimportant, the calculation confirms that no hidden dependence on \(n\) is present.

For fixed \(\theta\), the function of \(\eta\) is
\begin{equation}\label{eq:K-eta}
        K_\theta(\eta)=\theta(1+\eta)
        \left(1+\frac{A_\theta}{\eta^2}\right),
        \qquad
        A_\theta=(1-\theta^{-1})^{-1}=\frac{\theta}{\theta-1}.
\end{equation}
Differentiating gives
\begin{align}\label{eq:K-derivative}
        \frac{1}{\theta}K_\theta'(\eta)
        &=1+\frac{A_\theta}{\eta^2}
          -(1+\eta)\frac{2A_\theta}{\eta^3}\\
        &=1-\frac{A_\theta}{\eta^2}-\frac{2A_\theta}{\eta^3}.
\end{align}
Thus the stationary point is the unique positive solution of
\begin{equation}\label{eq:eta-cubic}
        \eta^3=A_\theta(\eta+2).
\end{equation}
For fixed \(\eta\), the dependence on \(\theta\) is
\begin{equation}\label{eq:K-theta}
        K(\theta,\eta)=
        \theta(1+\eta)
        \left(1+\frac{\theta}{\eta^2(\theta-1)}\right),
\end{equation}
and
\begin{equation}\label{eq:theta-derivative}
        \partial_\theta\left[\theta+\frac{\theta^2}{\eta^2(\theta-1)}\right]
        =1+\frac{1}{\eta^2}\frac{\theta(\theta-2)}{(\theta-1)^2}.
\end{equation}
The minimizer in \(\theta\) therefore lies in \((1,2)\) when \(\eta\) is fixed and the derivative changes sign there.  These elementary equations may be solved numerically, but the final theorem only needs the existence of a finite constant
\begin{equation}\label{eq:Kstar}
        K_*=
        \inf_{\theta>1,\eta>0}K(\theta,\eta)<\infty.
\end{equation}
Consequently one may state the tail in the slightly sharper form
\begin{equation}\label{eq:tail-Kstar}
        \mu_n\{\gamma(L)>T\}
        \le\frac{C_RK_*}{T},
        \qquad n\ge3,
\end{equation}
where the infimum is understood by first taking parameters with \(K(\theta,\eta)\le K_*+\eps\) and then letting \(\eps\downarrow0\).  Equation \eqref{eq:main-tail-four} gives the explicit admissible value \(K(2,1)=12\), hence \(K_*\le12\).  All asymptotic consequences are identical under either constant.

\section{Detailed substitution into the SVP complexity condition}

The \(\SVP\) result in \cite{aggarwal2025} uses \(\gamma(L)=\beta(L)^n\).  Let
\begin{equation}\label{eq:b-def-detailed}
        b(L)=\log_2\beta(L)=\frac{1}{n}\log_2\gamma(L).
\end{equation}
The subexponential regime is precisely
\begin{equation}\label{eq:b-o-one}
        b(L_n)=o(1).
\end{equation}
Theorem \ref{thm:main-tail} gives a quantitative probability bound for this event.  For any deterministic sequence \(b_n>0\),
\begin{align}\label{eq:b-tail}
        \mu_n\{b(L_n)>b_n\}
        &=\mu_n\{\gamma(L_n)>2^{b_nn}\}\\
        &\le C_\gamma 2^{-b_nn}.
\end{align}
Thus every sequence satisfying
\begin{equation}\label{eq:bn-condition}
        b_n\downarrow0,
        \qquad
        b_nn\to\infty
\end{equation}
gives
\begin{equation}\label{eq:b-high-prob}
        b(L_n)\le b_n
        \qquad\hbox{with probability }1-O(2^{-b_nn}).
\end{equation}
For instance,
\begin{equation}\label{eq:bn-examples}
        b_n=n^{-1/2},
        \qquad
        b_n=\frac{(1+\delta)\log_2 n}{n}\quad(\delta>0),
        \qquad
        b_n=\frac{\log_2\log n}{n}
\end{equation}
are all allowed in the high-probability sense.  The first two choices give summable failure probabilities, while the last gives only failure \(O((\log n)^{-1})\), not a summable failure probability.

Let \(c_{\rm cl}(b)\), \(c_{\rm q}(b)\), and \(c_{\rm qram}(b)\) denote the three running-time exponent functions obtained in the \(\gamma\)-dependent analysis of \cite{aggarwal2025}, so that the corresponding running times have the form
\begin{equation}\label{eq:c-functions}
        2^{c_{\rm cl}(b)n+o(n)},
        \qquad
        2^{c_{\rm q}(b)n+o(n)},
        \qquad
        2^{c_{\rm qram}(b)n+o(n)}.
\end{equation}
The relevant values at \(b=0\) are
\begin{equation}\label{eq:c-zero}
        c_{\rm cl}(0)=1.292,
        \qquad
        c_{\rm q}(0)=0.750,
        \qquad
        c_{\rm qram}(0)=0.667.
\end{equation}
The optimization formulas in \cite{aggarwal2025} are continuous at \(b=0\).  Therefore, for every \(b_n\to0\),
\begin{equation}\label{eq:c-continuity}
        c_{\rm cl}(b_n)=1.292+o(1),
        \quad
        c_{\rm q}(b_n)=0.750+o(1),
        \quad
        c_{\rm qram}(b_n)=0.667+o(1).
\end{equation}
Combining \eqref{eq:b-high-prob} and \eqref{eq:c-continuity} gives the following explicit probability statement: if \(b_n\to0\) and \(b_nn\to\infty\), then with probability at least \(1-O(2^{-b_nn})\),
\begin{align}\label{eq:times-with-bn}
        T_{\rm cl}(L_n)&=2^{(1.292+o(1))n},\\
        T_{\rm q}(L_n)&=2^{(0.750+o(1))n},\\
        T_{\rm qram}(L_n)&=2^{(0.667+o(1))n}.
\end{align}
Choosing \(b_n=n^{-1/2}\) gives the summable failure bound
\begin{equation}\label{eq:failure-sqrt}
        O(2^{-\sqrt n})
\end{equation}
and hence the product almost-sure version by Borel--Cantelli.  Choosing \(b_n=2\log_2(n)/n\) gives the polynomially bounded geometric parameter
\begin{equation}\label{eq:gamma-n2-again}
        \gamma(L_n)\le n^2,
        \qquad
        \beta(L_n)\le n^{2/n},
\end{equation}
with failure \(O(n^{-2})\), also summable.  This is the form most directly comparable with a deterministic family condition: almost surely along independent Haar--Siegel dimensions,
\begin{equation}\label{eq:det-family-a-s}
        \gamma(L_n)\le n^2=2^{o(n)}
\end{equation}
for all sufficiently large \(n\), so the family \((L_n)\) satisfies the hypothesis \(\gamma(L_n)=2^{o(n)}\) eventually.

\section{An abstract self-normalized counting principle}

The proof can be separated from lattices entirely.  This abstraction is useful because it identifies the exact probabilistic input: a variance bound linear in volume for a monotone counting process.

Let \((\Omega,\calF,\Prob)\) be a probability space and let
\begin{equation}\label{eq:abstract-process}
        M:\Omega\times(0,\infty)\to\mathbb Z_{\ge0},
        \qquad
        (\omega,V)\mapsto M_\omega(V),
\end{equation}
be nondecreasing and right-continuous in \(V\).  Define
\begin{equation}\label{eq:abstract-Vstar}
        V_*(\omega)=\inf\{V>0:M_\omega(V)>0\}
\end{equation}
and, on the event \(V_*(\omega)<\infty\),
\begin{equation}\label{eq:abstract-Gamma}
        \Gamma(\omega)=
        \sup_{V\ge V_*(\omega)}M_\omega(V)\frac{V_*(\omega)}{V}.
\end{equation}
Assume that there is a constant \(C_0\) such that, for every \(V>0\),
\begin{equation}\label{eq:abstract-variance}
        \E M(V)=V,
        \qquad
        \E(M(V)-V)^2\le C_0V.
\end{equation}
Then \(V_*<\infty\) almost surely, because \(\Prob\{V_*>s\}\le\Prob\{M(s)=0\}\le C_0/s\to0\).

\begin{theorem}[Abstract pivot inequality]\label{thm:abstract-pivot}
Under \eqref{eq:abstract-variance}, for every \(s>0\), \(\theta>1\), and \(\eta>0\),
\begin{equation}\label{eq:abstract-tail-param}
        \Prob\{\Gamma>\theta(1+\eta)s\}
        \le
        \frac{C_0}{s}
        \left(1+\frac{1}{\eta^2(1-\theta^{-1})}\right).
\end{equation}
Consequently, for an absolute constant depending only on \(C_0\),
\begin{equation}\label{eq:abstract-tail}
        \Prob\{\Gamma>T\}\ll_{C_0}T^{-1}.
\end{equation}
\end{theorem}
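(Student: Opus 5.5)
The plan is to repeat the proof of Theorem \ref{thm:main-tail} word for word, using only the abstract hypotheses. First I record the two Chebyshev consequences of \eqref{eq:abstract-variance}, in the same way \eqref{eq:empty-cheb} and \eqref{eq:upper-cheb} followed from \eqref{eq:M-mean-var}:
\[
\Prob\{M(V)=0\}\le\Prob\{|M(V)-V|\ge V\}\le \frac{C_0}{V},
\qquad
\Prob\{M(V)>(1+\eta)V\}\le\frac{C_0}{\eta^2V}.
\]
Next I define the abstract pivot event
\[
\calE=\{M(s)>0\}\cap\bigcap_{j\ge0}\{M(\theta^js)\le(1+\eta)\theta^js\}.
\]
The union bound and the geometric series \eqref{eq:geom-series-intro} then give exactly the right-hand side of \eqref{eq:abstract-tail-param} for \(\Prob(\Omega\setminus\calE)\), as in Lemma \ref{lem:prob-pivot}. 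Each event here involves \(M\) at a single fixed \(V\), so measurability is automatic. The complement is a countable union, so no issue arises.

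The deterministic step is the analogue of Lemma \ref{lem:deterministic-pivot}. Fix \(\omega\in\calE\). Since \(M_\omega(s)>0\), the definition \eqref{eq:abstract-Vstar} gives \(V_*(\omega)\le s<\infty\). Here only \(V_*\le s\) is needed, so attainment of the infimum is not required, although right-continuity would supply it.

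Now take \(V\ge V_*\) and split into two cases.
\begin{itemize}
\item If \(V\le s\), monotonicity gives \(M(V)V_*/V\le M(s)\le(1+\eta)s\).
\item If \(V>s\), pick \(j\ge0\) with \(\theta^js\le V<\theta^{j+1}s\). Monotonicity and the grid condition at \(j+1\) give \(M(V)/V\le\theta(1+\eta)\), and therefore \(M(V)V_*/V\le\theta(1+\eta)V_*\le\theta(1+\eta)s\).
\end{itemize}
Taking the supremum over \(V\ge V_*\) yields \(\Gamma\le\theta(1+\eta)s\) on \(\calE\). Hence \(\{\Gamma>\theta(1+\eta)s\}\subseteq\Omega\setminus\calE\), which proves \eqref{eq:abstract-tail-param}.

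For \eqref{eq:abstract-tail}, set \(\theta=2\), \(\eta=1\), \(T=4s\). This gives \(\Prob\{\Gamma>T\}\le 12C_0/T\) for all \(T>0\). The bound is also trivially at most one.

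The only point needing care, and so the main obstacle, is measurability of \(\Gamma\) or of the event \(\{\Gamma>T\}\). I plan to sidestep it in one of two ways. The first is to observe that the tail event is contained in the measurable set \(\Omega\setminus\calE\); this suffices if \(\Prob\) is read as outer probability. The second is to mirror the countable reduction \eqref{eq:gamma-equals-Q}. Right-continuity and monotonicity of \(M\) give \(V_*=\inf\{q\in\mathbb Q_{>0}:M(q)>0\}\), and
\[
\Gamma=\sup_{q\in\mathbb Q_{>0},\,q\ge V_*}M(q)\,V_*/q,
\]
by the same approximation from above \(V_m\downarrow V\) used in \eqref{eq:rational-down}. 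This makes \(\Gamma\) a countable supremum of measurable functions, so the statement holds with genuine probability.
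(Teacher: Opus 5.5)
Your proposal is correct and follows the paper's proof essentially line for line: the same pivot event, the same two-case deterministic bound (monotonicity on $[V_*,s]$, and the $\theta$-adic grid together with $V_*\le s$ for $V>s$), and the same Chebyshev-plus-geometric-series union bound, with $(\theta,\eta)=(2,1)$ giving $12C_0/T$. Your measurability discussion via the rational reduction is a harmless addition that the paper carries out only in the lattice setting, in its section on countable reductions, and not in the abstract proof itself.
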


\begin{proof}
Define the abstract good event
\begin{equation}\label{eq:abstract-good}
        \calE(s,\theta,\eta)=
        \{M(s)>0\}
        \cap
        \bigcap_{j\ge0}\{M(\theta^js)\le(1+\eta)\theta^js\}.
\end{equation}
On \(\calE(s,\theta,\eta)\), one has \(V_*\le s\).  If \(V\in[V_*,s]\), then
\begin{equation}\label{eq:abstract-small}
        M(V)\frac{V_*}{V}
        \le M(s)
        \le(1+\eta)s.
\end{equation}
If \(V>s\), choose \(j\ge0\) with \(\theta^js\le V<\theta^{j+1}s\).  Then
\begin{equation}\label{eq:abstract-large}
        M(V)\frac{V_*}{V}
        \le M(\theta^{j+1}s)\frac{s}{V}
        \le (1+\eta)\theta^{j+1}s\frac{s}{\theta^js}
        =\theta(1+\eta)s.
\end{equation}
Thus \(\Gamma\le\theta(1+\eta)s\) on \(\calE(s,\theta,\eta)\).  By Chebyshev and \eqref{eq:abstract-variance},
\begin{equation}\label{eq:abstract-empty}
        \Prob\{M(s)=0\}
        \le\Prob\{|M(s)-s|\ge s\}
        \le C_0s^{-1},
\end{equation}
and
\begin{equation}\label{eq:abstract-grid}
        \Prob\{M(\theta^js)>(1+\eta)\theta^js\}
        \le \frac{C_0}{\eta^2\theta^js}.
\end{equation}
Summing \eqref{eq:abstract-grid} over \(j\ge0\) proves \eqref{eq:abstract-tail-param}.
\end{proof}

The lattice theorem is the specialization
\begin{equation}\label{eq:lattice-specialization}
        \Omega=X_n,
        \quad
        \Prob=\mu_n,
        \quad
        M_\omega(V)=M_L(V),
        \quad
        V_*(\omega)=\Vlam(L),
        \quad
        \Gamma(\omega)=\gamma(L),
\end{equation}
with \(C_0=C_R\).  The abstraction also explains why the proof is insensitive to the exact shape of the ball and to most boundary conventions: once a monotone family of sets has the first two moment bounds in \eqref{eq:abstract-variance}, the self-normalized supremum has a \(T^{-1}\) tail.

\section{Moment and entropy consequences}

Although the SICOMP application only needs a high-probability subexponential estimate, the tail bound also gives uniform moment information.  The estimates below are immediate, but they are useful diagnostics for the size of \(\gamma(L)\) in the Haar--Siegel model.

\begin{proposition}[Fractional moments]\label{prop:fractional-moments}
For every \(0<p<1\),
\begin{equation}\label{eq:fractional-moment}
        \sup_{n\ge3}\int_{X_n}\gamma(L)^p\dn<\infty.
\end{equation}
More quantitatively, since \(\gamma(L)\ge2\),
\begin{equation}\label{eq:fractional-bound}
        \int_{X_n}\gamma(L)^p\dn
        \le 2^p+\frac{pC_\gamma}{1-p}2^{p-1}.
\end{equation}
\end{proposition}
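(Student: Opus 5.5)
The plan is to compute the $p$-th moment from the tail bound of Theorem~\ref{thm:main-tail} by the layer-cake formula. Since $\gamma$ is Borel (Section on measurability, via the countable reduction $\gamma=\gamma_{\mathbb Q}$) and nonnegative, for $0<p<1$ we have
\[
        \int_{X_n}\gamma(L)^p\dn
        =\int_0^\infty p\,t^{p-1}\,\mu_n\{L:\gamma(L)>t\}\,dt .
\]
The value $\gamma(L)=+\infty$ need not be excluded in advance: the tail bound $\mu_n\{\gamma>t\}\le C_\gamma t^{-1}\to0$ already shows that $\gamma<\infty$ $\mu_n$-almost surely, and the formula holds for extended-valued nonnegative functions anyway.

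I would split the $t$-integral at $t=2$, which is the natural threshold because $\gamma(L)\ge\tau(L)\ge2$ by \eqref{eq:M-zero-positive}.

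On $[0,2]$ I bound the probability trivially by $1$. This gives
\[
        \int_0^2 p\,t^{p-1}\,dt=2^p .
\]

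On $[2,\infty)$ I use the bound $\mu_n\{\gamma>t\}\le C_\gamma t^{-1}$ from \eqref{eq:main-tail}. The contribution is at most
\[
        pC_\gamma\int_2^\infty t^{p-2}\,dt
        =pC_\gamma\,\frac{2^{p-1}}{1-p}.
\]
This integral converges precisely because $p<1$, and that is the only place the restriction on $p$ enters. Adding the two pieces gives \eqref{eq:fractional-bound}.

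Since $C_\gamma$ is absolute and independent of $n$, the right-hand side of \eqref{eq:fractional-bound} does not depend on $n$, and taking the supremum over $n\ge3$ yields \eqref{eq:fractional-moment}.

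There is no real obstacle here. The only points needing care are measurability of $\gamma$, which was already established, and the convergence at infinity, which explains why $p=1$ is excluded: a $T^{-1}$ tail alone does not give a finite mean.
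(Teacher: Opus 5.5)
Your proposal is correct and is the paper's own proof: the layer-cake formula, split at \(t=2\), with the trivial bound giving \(2^p\) on \([0,2]\) and the tail \(\mu_n\{\gamma>t\}\le C_\gamma t^{-1}\) on \([2,\infty)\) giving \(pC_\gamma 2^{p-1}/(1-p)\). Your added remarks, that measurability comes from the countable reduction \(\gamma=\gamma_{\mathbb Q}\) and that \(p<1\) is needed only for convergence at infinity, are accurate and consistent with the paper.
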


\begin{proof}
For a nonnegative random variable \(Y\),
\begin{equation}\label{eq:layer-cake}
        \E Y^p=p\int_0^\infty t^{p-1}\Prob\{Y>t\}\,dt.
\end{equation}
With \(Y=\gamma(L)\), split the integral at \(t=2\).  The part \([0,2]\) is at most \(2^p\).  For \(t\ge2\), use \eqref{eq:main-tail}:
\begin{equation}\label{eq:fractional-tail-integral}
        p\int_2^\infty t^{p-1}\Prob\{\gamma(L)>t\}\,dt
        \le pC_\gamma\int_2^\infty t^{p-2}\,dt
        =\frac{pC_\gamma}{1-p}2^{p-1}.
\end{equation}
\end{proof}

The endpoint \(p=1\) is not supplied by the \(T^{-1}\) tail alone.  A logarithmic truncated bound is still uniform.

\begin{proposition}[Truncated first moment]\label{prop:truncated-first}
For every \(A\ge2\) and every \(n\ge3\),
\begin{equation}\label{eq:truncated-first}
        \int_{X_n}\min\{\gamma(L),A\}\dn
        \le 2+C_\gamma\log A.
\end{equation}
If one combines this with the asymptotic deterministic worst-case bound \(\sup_L\gamma(L)\le2^{0.402n+o(n)}\), then
\begin{equation}\label{eq:first-moment-linear}
        \int_{X_n}\gamma(L)\dn=O(n).
\end{equation}
\end{proposition}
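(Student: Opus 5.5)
Both parts reduce to earlier results: the truncated bound follows from the layer-cake formula combined with the uniform tail \eqref{eq:main-tail}, and the linear first-moment bound comes from truncating at the worst-case level.

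\textbf{Truncated bound.} Set \(Y=\min\{\gamma(L),A\}\). Since \(Y\ge0\),
\[
\int_{X_n} Y\dn=\int_0^A \mu_n\{\gamma(L)>t\}\,dt .
\]
Split the integral at \(t=2\).
\begin{itemize}
\item On \([0,2]\) the integrand is at most \(1\), so this piece contributes at most \(2\).
\item On \([2,A]\), use \eqref{eq:main-tail}, which gives \(\mu_n\{\gamma>t\}\le C_\gamma t^{-1}\). This piece contributes at most \(C_\gamma\int_2^A t^{-1}\,dt=C_\gamma\log(A/2)\le C_\gamma\log A\).
\end{itemize}
Adding the two pieces gives \eqref{eq:truncated-first}, uniformly in \(n\ge3\), since \(C_\gamma\) is absolute. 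Measurability of \(\gamma\) is provided by the countable reduction \eqref{eq:gamma-equals-Q}.

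\textbf{First-moment bound.} Let \(A_n\) be the deterministic worst-case bound from \eqref{eq:gamma-KL}, so that \(\gamma(L)\le A_n=2^{0.402n+o(n)}\) for every \(L\subset\R^n\), and assume \(A_n\ge2\). Then \(\min\{\gamma(L),A_n\}=\gamma(L)\) identically on \(X_n\). Applying \eqref{eq:truncated-first} with \(A=A_n\) gives
\[
\int_{X_n}\gamma\dn\le 2+C_\gamma\log A_n = 2+C_\gamma\bigl(0.402\log 2\cdot n+o(n)\bigr)=O(n).
\]

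\textbf{Main caveat.} The only delicate point is the interpretation of \eqref{eq:gamma-KL}. It is asymptotic: for each fixed \(n\) it yields a finite bound \(A_n\), and \(\log A_n=O(n)\) as \(n\to\infty\). For the finitely many small \(n\) one needs some finite deterministic bound on \(\sup_L\gamma(L)\). Such a bound exists for each fixed \(n\), for example by a packing argument: \(N_L(r\lambda_1)\le(2r+1)^n\le 3^n r^n\) for \(r\ge1\). This crude bound in fact also gives \(\log A_n\le n\log 3\) directly for every \(n\), so the \(O(n)\) conclusion holds without relying on the asymptotic form at all. I would use this packing bound to avoid the \(o(n)\) issue entirely.
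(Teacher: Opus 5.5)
Your proof is correct and follows the paper's own argument. Both use the layer-cake identity $\int_{X_n}\min\{\gamma,A\}\,d\mu_n=\int_0^A\mu_n\{\gamma>t\}\,dt$, split at $t=2$, and apply the uniform tail of Theorem~\ref{thm:main-tail} on $[2,A]$; both then truncate at a deterministic worst-case level $A_n$ with $\log A_n=O(n)$.

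Your one deviation is harmless and slightly cleaner. You replace the asymptotic Kabatianskii--Levenshtein input with the elementary packing bound $\gamma(L)\le 3^n$, which is valid for every $n$ and sidesteps the $o(n)$ issue.
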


\begin{proof}
Again by the layer-cake formula,
\begin{align}\label{eq:min-layer-cake}
        \E\min\{\gamma,A\}
        &=\int_0^A\Prob\{\gamma>t\}\,dt\\
        &\le2+
        \int_2^A C_\gamma t^{-1}\,dt
        =2+C_\gamma\log(A/2)
        \le2+C_\gamma\log A.
\end{align}
Taking \(A=2^{0.402n+o(n)}\) and using the deterministic bound gives \eqref{eq:first-moment-linear}.
\end{proof}

The logarithmic size is genuinely small after passing to \(\beta=\gamma^{1/n}\).

\begin{corollary}[Logarithmic moment of \(\beta\)]\label{cor:log-beta}
There is an absolute constant \(C_{\log}\) such that, for every \(n\ge3\),
\begin{equation}\label{eq:log-gamma-uniform}
        \int_{X_n}\log(1+\gamma(L))\dn\le C_{\log},
\end{equation}
and hence
\begin{equation}\label{eq:log-beta-mean}
        \int_{X_n}\log\beta(L)\dn
        =\frac1n\int_{X_n}\log\gamma(L)\dn
        =O(n^{-1}).
\end{equation}
\end{corollary}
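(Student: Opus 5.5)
The plan is to bound \(\int_{X_n}\log(1+\gamma(L))\dn\) with the layer-cake formula and the tail bound \eqref{eq:main-tail}. For a nonnegative random variable \(Y\) and an increasing absolutely continuous \(\phi\) with \(\phi(0)=0\), we have \(\E\phi(Y)=\int_0^\infty\phi'(t)\Prob\{Y>t\}\,dt\). Taking \(\phi(t)=\log(1+t)\) and \(Y=\gamma(L)\) gives
\[
        \int_{X_n}\log(1+\gamma(L))\dn
        =\int_0^\infty\frac{1}{1+t}\,\mu_n\{\gamma(L)>t\}\,dt .
\]
I will split the integral at \(t=1\). On \([0,1]\) the probability is at most one, so that part contributes at most \(\log 2\). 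On \([1,\infty)\) I use \(\mu_n\{\gamma>t\}\le C_\gamma t^{-1}\), which bounds the remainder by
\[
        C_\gamma\int_1^\infty\frac{dt}{t(1+t)}=C_\gamma\log 2 .
\]
Hence \(C_{\log}=(1+C_\gamma)\log2\) works, and it is independent of \(n\) because \(C_\gamma\) is absolute. This is the key point: the \(T^{-1}\) tail is integrable against \(dt/(1+t)\) even though it is not integrable against \(dt\), which is exactly why the first moment failed in Proposition \ref{prop:truncated-first}. Measurability of \(\gamma\) is provided by the countable reduction \eqref{eq:gamma-equals-Q}.

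For \eqref{eq:log-beta-mean}, the identity \(\log\beta=\frac1n\log\gamma\) follows from \eqref{eq:beta-def}. Since \(\gamma(L)\ge\tau(L)\ge2>0\), \(\log\gamma\) is nonnegative and \(\log\gamma\le\log(1+\gamma)\). Integrating therefore gives \(0\le\int\log\beta\dn\le C_{\log}/n=O(n^{-1})\).

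There is no real obstacle here. The only care needed is to split the integral so that the \(t^{-1}\) tail is used only where it is integrable against \(1/(1+t)\), and to note that \(\gamma\ge2\) keeps \(\log\gamma\) nonnegative, so the integral of \(\log\beta\) is well defined.
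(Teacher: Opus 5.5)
Your proof is correct and is essentially the paper's argument. Your weighted layer-cake integral \(\int_0^\infty (1+t)^{-1}\mu_n\{\gamma>t\}\,dt\) is the paper's \(\int_0^\infty \mu_n\{\gamma>e^u-1\}\,du\) after substituting \(t=e^u-1\), and both then apply the \(T^{-1}\) tail beyond a fixed split point. You split at \(t=1\) and the paper at \(t=2\) (i.e.\ \(u_0=\log 3\)), which only changes the absolute constant; the passage to \(\log\beta\) via \(0\le\log\gamma\le\log(1+\gamma)\) is also the same.
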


\begin{proof}
For a nonnegative random variable \(Y\),
\begin{equation}\label{eq:log-layer}
        \E\log(1+Y)=\int_0^\infty\Prob\{\log(1+Y)>u\}\,du
        =\int_0^\infty\Prob\{Y>e^u-1\}\,du.
\end{equation}
Split at \(u_0=\log3\).  The contribution of \([0,u_0]\) is at most \(u_0\).  For \(u\ge u_0\), \(e^u-1\ge e^u/2\), so
\begin{equation}\label{eq:log-tail-integral}
        \Prob\{\gamma>e^u-1\}
        \le 2C_\gamma e^{-u}.
\end{equation}
Thus
\begin{equation}\label{eq:log-bound-final}
        \E\log(1+\gamma)
        \le \log3+2C_\gamma\int_{\log3}^\infty e^{-u}\,du
        \le \log3+\frac{2C_\gamma}{3}.
\end{equation}
Since \(\gamma\ge1\), \(\log\gamma\le\log(1+\gamma)\), and \eqref{eq:log-beta-mean} follows from \(\log\beta=(\log\gamma)/n\).
\end{proof}

These estimates give another compact formulation of the result: the random variables \(\log\gamma(L_n)\) are uniformly integrable in an exponential-tail sense after logarithmic compression, and \(\log\beta(L_n)\) has mean \(O(1/n)\).  The high-probability statement \(\beta(L_n)=1+o(1)\) is the concentration counterpart of this mean estimate.

\section{Conclusion}

The conjectural geometric input in \cite{aggarwal2025} is that the \(\gamma\)-parameter governing their \(\SVP\) complexity should be subexponential for most lattices.  Under the standard Haar--Siegel model on unimodular lattices, the parameter is much smaller: it has a dimension-uniform tail
\begin{equation}\label{eq:conclusion-tail}
        \mu_n\{\gamma(L)>T\}\ll T^{-1}.
\end{equation}
Consequently,
\begin{equation}\label{eq:conclusion-whp}
        \forall a_n\to\infty,
        \qquad
        \gamma(L_n)\le a_n
        \qquad\hbox{with probability }1-O(a_n^{-1}),
\end{equation}
and in particular
\begin{equation}\label{eq:conclusion-subexp}
        \gamma(L_n)=2^{o(n)},
        \qquad
        \beta(L_n)=1+o(1)
\end{equation}
with high probability.  Along independent dimensions, one has the almost-sure statements
\begin{equation}\label{eq:conclusion-as}
        \gamma(L_n)\le e^{\sqrt n}=2^{o(n)},
        \qquad
        \gamma(L_n)\le n^2,
\end{equation}
eventually almost surely.  The proof is short because the random scale \(\lambda_1(L)\) can be handled by a pivot volume: force one point in \(\Ball(s)\), control ordinary counts on the geometric grid \(s,\theta s,\theta^2s,\ldots\), and convert ordinary counts into the self-normalized profile by the identity
\begin{equation}\label{eq:conclusion-identity}
        \frac{N_L(r\lambda_1(L))}{r^n}
        =M_L(V)\frac{\Vlam(L)}{V}.
\end{equation}
This gives a complete proof of the Haar--Siegel ``most lattices'' version of the subexponential \(\gamma\)-claim and upgrades it to tightness up to an arbitrarily slowly diverging threshold.
\section*{Declaration of Generative AI and AI-Assisted Technologies in the Writing Process}
During the preparation of this work, the authors used DeepSeek to build a specialized agent for solving mathematical problems, which was employed to generate an initial proof of the main theorem. After using this tool, the authors reviewed and edited the content as needed and take full responsibility for the content of the published article.

\end{document}